\documentclass[secnum,leqno]{rims-bessatsu}
\usepackage{amssymb, amsmath}
\usepackage{graphics}
\usepackage[dvips]{graphicx}
\usepackage{eepic}
\usepackage{epic}
%
%
%
\newtheorem{thm}{Theorem}[section]

\theoremstyle{definition}

\newtheorem{exa}[thm]{Example}
\theoremstyle{remark}
\newtheorem*{rem}{Remark}
%
%
%
%

\newcommand\DN{\newcommand}\newcommand\DR{\renewcommand}

\DN\sx{\theta_{-x} (\mathsf{s}-\delta_x)}
\DN\si{\theta_{-\lab _i (\mathsf{s})} (\mathsf{s}-\delta_{\lab _i (\mathsf{s})} )}

\DN\lref[1]{Lemma~\ref{#1}}
\DN\tref[1]{Theorem~\ref{#1}}
\DN\pref[1]{Proposition~\ref{#1}}
\DN\sref[1]{Section~\ref{#1}}
\DN\ssref[1]{Subsection~\ref{#1}}
\DN\dref[1]{Definition~\ref{#1}}
\DN\rref[1]{Remark~\ref{#1}} 
\DN\corref[1]{Corollary~\ref{#1}}
\DN\eref[1]{Example~\ref{#1}}
\numberwithin{equation}{section}
\newcounter{Const} \setcounter{Const}{0}

\DN\Ct{\refstepcounter{Const}c_{\theConst}}
\numberwithin{Const}{section}
\DN\cref[1]{c_{\ref{#1}}}

\DN\R{\mathbb{R}}
\DN\N{\mathbb{N}}
\DN\Q{\mathbb{Q}}
\DN\C{\mathbb{C}}
\DN\Z{\mathbb{Z}}

\DN\map[3]{#1\!:\!#2\!\to\!#3}
\DN\ot{ \otimes } 
\DN\ts{ \times }

\DN\limi[1]{\lim_{#1\to\infty}} 	
\DN\limz[1]{\lim_{#1\to0}}
\DN\limsupi[1]{\limsup_{#1\to\infty}} 	
\DN\limsupz[1]{\limsup_{#1\to0}}
\DN\liminfi[1]{\liminf_{#1\to\infty}} 	
\DN\liminfz[1]{\liminf_{#1\to0}}

\DN\sumii[1]{\sum_{#1=1}^{\infty}}
\DN\sumi[1]{\sum_{#1=0}^{\infty}}

\DN\PD[2]{\frac{\partial#1}{\partial#2}}
\DN\half{\frac{1}{2}}
\DN\Rd{\R ^d}

\DN\bs{\bigskip}
\DN\ms{\smallskip}

\setcounter{page}{1}

\usepackage{stackrel} 

\newtheorem{lem}[thm]{Lemma}

\DN\CR{C([0,\infty);\R )}
\DN\CRd{C([0,\infty);\Rd )}

\DN\jZi{j\in\Z,\, j\not= i }
\DN\iZs{i\in\Z ^*}
\DN\iZ{i\in\Z }
\DN\Emu{\mathcal{E}^{\mu } }
\DN\Emuz{\mathcal{E}^{\muz } }
\DN\Lmu{L^2(\mathsf{S},\mu )}
\DN\Lmuone{L^2(\mathsf{S},\mu ^{[1]})}
\DN\Lmuz{L^2(\mathsf{S},\muz )}
\DN\D{D^{\mathrm{sft}}}
\DN\ee{1}
\DN\ve{\varepsilon}
\DN\X{X}
\DN\selfmu{\alpha [\mu ]}

\DN\Lmz{L^{2}(\muz )}
\DN\Lmg{L^{2}(\mug )}
\DN\Lmgz{L^{2}(\mugz )}

\DN\Lm{L^{2}(\mu )}
\DN\Lmgone{L^{2}(\mugone )}
\DN\Llocmgone{L^{2}_{\mathrm{loc}}(\mugone )}

\DN\Lmuk{L^{2}(\muk )}
\DN\Lmone{L^{2}(\muone )}
\DN\Llocp{L_{\mathrm{loc}}^{p}}
\DN\Llocq{L_{\mathrm{loc}}^{q}}
\DN\Lloctwo{L_{\mathrm{loc}}^{2}}
\DN\Llocone{L_{\mathrm{loc}}^{1}}

\DN\Rs{\mathbf{R}^2}
\DN\RSR{\Rs \ts \SSS \ts \Rs }
\DN\Dsft{D^{\mathrm{sft}} }
\DN\Dsftone{\Dsft _1}
\DN\Dsfttwo{\Dsft _2}

\DN\XY{X\Y}
\DN\DXY{\mathbb{D}_{\XY }}
\DN\DY{\mathbb{D}_{\Y }}
\DN\nablastar{\nabla_{x^{\star}}}
\DN\nablaYZ{\nabla _{\xstar + \mathrm{s ft}}}
\DN\Dstar {\mathbb{D}_{\star }}
\DN\DXYstar{\mathbb{D}_{\XY \star }}

\DN\Ce{\gamma }
\DN\Cet{\tilde{\Ce }}
\DN\Ceti{\tilde{\Ce }_i}
\DN\xstar{x^{\star }}
\DN\Ystar{Y^{\star }}
\DN\hatYstar{\hat{Y}^{\star }}

\DN\CY{C^{\infty} (\Rd )\ot\dY } 
\DN\dione{C^{\infty}_{0} (\SS )\ot \di }

\DN\dom{\mathcal{D}}
\DN\dz{\dom _{\circ }}
\DN\dzr{\dom _{\circ , r}} 
\DN\dzq{\dom _{\circ , q}} 
\DN\dnuik{C_0^{\infty}(\Sk )\ot\di }

\DN\di{C_0^{\infty}(\R )}

\DN\dY{\mathcal{D}_{\Y }}
\DN\dYz{\mathcal{D}_{\Y \circ }}
\DN\dtY{\dt _{\Y }}
\DN\dYr{\mathcal{D}_{\Y ,r}}
\DN\dYtwo{\dtY ^{2}}
\DN\dt{\tilde{\mathcal{D}}}
\DN\dtYz{\dt _{\Y \circ }}
\DN\dYtr{\tilde{\mathcal{D}}_{\Y ,r}}
\DN\dYtrone{\tilde{\mathcal{D}}_{\Y ,r}^1}
\DN\dYtrtwo{\tilde{\mathcal{D}}_{\Y ,r}^2}

\DN\dYtone{\dtY ^1}
\DN\dYttwo{\dtY ^2}
\DN\dYttwoR{\dt _{\Y ,r} ^2}

\DN\dYL{\mathcal{D}_{\Y }^{\mathrm{L}}}
\DN\dYLr{\mathcal{D}_{\Y ,r}^{\mathrm{L}}}
\DN\dYLtwo{\dtY ^{\mathrm{L},2}}
\DN\dYLt{\dtY ^{\mathrm{L}}}

\DN\dYLtperp{\dtY ^{2,\perp }}
\DN\dYLtr{\tilde{\mathcal{D}}_{\Y ,r}^{\mathrm{L}}}
\DN\dYLtrone{\tilde{\mathcal{D}}_{\Y ,r}^{\mathrm{L},1}}
\DN\dYLtrtwo{\tilde{\mathcal{D}}_{\Y ,r}^{\mathrm{L},2}}

\DN\dYLtq{\tilde{\mathcal{D}}_{\Y ,q}^{\mathrm{L}}}
\DN\dYLtqone{\tilde{\mathcal{D}}_{\Y ,q}^{\mathrm{L},1}}
\DN\dYLtqtwo{\tilde{\mathcal{D}}_{\Y ,q}^{\mathrm{L},2}}

\DN\dYLtone{\dtY ^{\mathrm{L},1}}
\DN\dYLttwo{\dtY ^{\mathrm{L},2}}

\DN\dXY{\mathcal{D}_{\XY }}
\DN\dXYloc{\mathcal{D}_{\XY ,\mathrm{loc}}}
\DN\dXYstar{\mathcal{D}_{\XY \star }}
\DN\dXYstarz{\mathcal{D}_{\circ}(\RSR ) } 
\DN\dYQ{\widetilde{\mathcal{D}}_{\Y }}
\DN\dYQz{\widetilde{\mathcal{D}}_{\Y \circ }}
\DN\dstar{\mathcal{D}_{\star}}
\DN\dYstarz{\mathcal{D}_{\circ}(\SSS \ts \Rs ) }

\DN\dzYZ{\dz (\SSS \ts \R  )}

\DN\EXY{\mathcal{E}_{\XY }}
\DN\EXYone{\mathcal{E}_{\XY }^1}
\DN\EXYtwo{\mathcal{E}_{\XY }^2}
\DN\EXYstar{\mathcal{E}_{\XY \star}}

\DN\Y{\mathsf{Y}}
\DN\EY{\mathcal{E}_{\Y }}
\DN\EYone{\mathcal{E}_{\Y }^{1}}
\DN\EYtwo{\mathcal{E}_{\Y }^{2}}
\DN\EYt{\tilde{\mathcal{E}}_{\Y }}
\DN\EYtone{\tilde{\mathcal{E}}_{\Y }^{1}}
\DN\EYttwo{\tilde{\mathcal{E}}_{\Y }^{2}}
\DN\EYtrperp{\EYtr ^{2,\perp }}
\DN\EYtrgamma{\EYtr ^{2,\gamma }}
\DN\EYtperp{\EYt ^{2,\perp }}
\DN\EYtr{\tilde{\mathcal{E}}_{\Y ,r}}
\DN\EYtrone{\EYtr ^{1}}
\DN\EYtrtwo{\EYtr ^{2}}

\DN\EYL{\mathcal{E}_{\Y }^{\mathrm{L}}}
\DN\EYLone{\mathcal{E}_{\Y }^{\mathrm{L},1}}
\DN\EYLtwo{\mathcal{E}_{\Y }^{\mathrm{L},2}}

\DN\EYLr{\mathcal{E}_{\Y ,r}^{\mathrm{L}}}
\DN\EYLrone{\mathcal{E}_{\Y ,r}^{\mathrm{L},1}}
\DN\EYLrtwo{\mathcal{E}_{\Y ,r}^{\mathrm{L},2}}

\DN\EYLt{\tilde{\mathcal{E}}_{\Y }^{\mathrm{L}}}
\DN\EYLtone{\tilde{\mathcal{E}}_{\Y }^{\mathrm{L},1}}
\DN\EYLttwo{\tilde{\mathcal{E}}_{\Y }^{\mathrm{L},2}}

\DN\EYLtr{\EYtr ^{\mathrm{L}}}
\DN\EYLtrone{\EYtr ^{\mathrm{L},1}}
\DN\EYLtrtwo{\EYtr ^{\mathrm{L},2}}

\DN\EYLtq{\tilde{\mathcal{E}}_{\Y ,q}^{\mathrm{L}}}
\DN\EYLtqone{\tilde{\mathcal{E}}_{\Y ,q}^{\mathrm{L},1}}
\DN\EYLtqtwo{\tilde{\mathcal{E}}_{\Y ,q}^{\mathrm{L},2}}

\DN\Estar{\mathcal{E}_{\star }}

\DN\PY{\PP ^{\Y }}
\DN\PZ{\PP ^{\Z }}
\DN\PXY{\PP ^{\XY }}

\DN\ulab{\mathfrak{u} }
\DN\kpath{\ulab _{\mathrm{path}}}
\DN\kkpath{\ulab _{\mathrm{k,path}}}
\DN\lab{\mathfrak{l}}
\DN\labi{\mathfrak{l}_i}
\DN\labz{\mathfrak{l}_0}
\DN\hhh{\mathfrak{h}}

\DN\fff{\mathbf{f}}
\DR\ggg{\mathbf{g}}
\DR\gg{\mathsf{g}}
\DN\ff{\mathsf{f}}
\DN\f{\mathsf{F}} 
\DN\ffN{\f ^{N}}
\DN\Fmrs{\f _{mrs}}
\DN\Fmrstilde{\tilde{\f }_{mrs}}
\DN\fmrs{f_{mrs}}

\DN\jj{j}
\DN\jmrs{\jj _{mrs}}
\DN\Jmrs{\mathsf{J}_{mrs}}
\DN\kmr{k_{mr}}

\DN\uu{u}
\DN\uN{\uu ^{N}}
\DN\vN{v^{N}}
\DN\vNsinfty{\int_{s\le |x-y|}\vN (x,y)dy }
\DN\vNs{\int_{|x-y|< s}\vN (x,y)dy }
\DN\vvv{v}
\DN\vsinfty{\int_{s \le |x-y|}\vvv (x,y)dy }
\DN\vs{\int_{|x-y|< s}\vvv (x,y)dy }

\DN\supN{\sup_{N\in\N}}
\DN\xyi{|x-y_i|}
\DN\yi{|y_i|}

\DN\gin{\mathrm{g}}
\DN\mug{\mu _{\gin }}
\DN\mugz{\mu _{0}}
\DN\muga{\mu _{\gin ,\mathsf{a}}}
\DN\mugx{\mu _{\gin ,x}}
\DN\mugxx{\mu _{\gin ,\mathbf{x}}}
\DN\mugN{\mu ^{N}_{\gin }}
\DN\mugNx{\mu ^{N}_{\gin ,x}}
\DN\mugNzero{\mu ^{N}_{\gin ,0}}
\DN\mugone{\mu _{\gin }^{[1]}}
\DN\mugNone{\mu ^{N,{[1]}}_{\gin }}
\DN\mugNstar{\mu ^{N*}_{\gin }}
 \DN\muone{\mu ^{[1]}}
 \DN\rg{\rho _{\gin }}
 \DN\rgx{\rho _{\gin ,x}}
 \DN\rgz{\rho _{\gin ,o}}
 \DN\rgn{\rho _{\gin }^n}

 \DN\kg{\mathsf{K}_{\gin }}

\DN\SSS{\mathsf{S}}
\DN\SSSS{\mathbf{S}}
\DN\SSSr{\SSS _{r}}
\DN\SSSrm{\SSS _{r}^{m}}
\DN\SSSrNk{\SSS _{r,N-k}}
 \DN\SSSg{\SSS _{\gin }}
 \DN\SSSSg{\SSSS _{\gin }}

\DN\SN{\SS ^{\mathbb{Z}}}
\DN\SSSksingle{\SSSsi ^{k}}
\DN\SSSsi{\SSS _{\mathrm{s.i.}}}
\DN\SSSone{\SSS ^{\mathbf{1}}}
\DN\SoneSSS{\SS \ts \SSS }

\DN\sS{S}
\DR\SS{\mathbb{R}} 
\DN\Sk{(\SS )^{k}}
\DN\Sr{\sS _{r}}
\DN\Se{\sS _{\epsilon }}
\DN\SR{\sS _{R}}
\DN\SeR{\sS _{\epsilon ,R}}
\DN\Ss{\sS _{s}}
\DN\Sq{\sS _{\q }}
\DN\Srr{\sS _{r}^{k}}
\DN\Sone{\sS _{1}}

\DN\dlog{\mathsf{d}}
\DN\dgin{\dlog ^{\mug }}
\DN\dginN{\dlog ^{\mugN }}
\DN\dginone{\dlog ^{\mug ^1}}
\DN\dginNone{\dlog ^{\mugNone }}

\DN\dmu{\dlog ^{\mu }}
\DN\dmuN{\dlog ^{N}}
\DN\dmuone{\dlog ^{\mu ^1}}

\DN\aaa{\mathsf{a}}
\DN\ssss{(s_i,(s_j)_{j\not=i})}
\DN\sss{\mathsf{s}}
\DN\xsss{(x,\sss )}
\DN\xxxx{(x,(y_j)_{j\in\N })}
\DN\xxxxx{(x,\sum_{j\in\N }\delta _{y_j})}


\DN\hatmux{\hat{\mu }_x}
\DN\hatmuz{\hat{\mu }_0}
\DN\muz{\mu _{0}}
\DN\dxmu{dx \ts \mu }
\DN\muk{\mu ^{{k}}}
\DN\mukg{\mug ^{{k}}}

\DN\RdT{\Rd \ts \SSS }


\DN\E{\mathcal{E}}
\DN\Ea{\E }
\DN\Eak{\E ^{k}}
\DN\Eaone{\E ^{1}}
\DN\Eazero{\E ^{0}}

\DN\DDD{\mathbb{D}}

\DN\iRT{\int_{\Rd \ts \SSS }}
\DN\iR{\int_{\Rd }}
\DN\iT{\int_{\SSS }}

\DN\Rdk{(\Rd)^{k}}
%
 \DN\PP{\mathsf{P}}

\DN\den{\varsigma}
\DN\gauss{\mathbf{g}}
\DN\self{\alpha }

\DN\Prj{\mathfrak{P}}
\DN\PrjY{\Prj _{\Y }} 
\DN\PrjYr{\Prj _{\Y ,r}} 

\DN\PrjYL{\Prj _{\Y }^{\mathsf{L}}} 
\DN\PrjYLr{\Prj _{\Y ,r}^{\mathrm{L}}} 
\DN\PrjYLrone{\Prj _{\Y ,r}^{\mathrm{L},1}} 
\DN\PrjYLrtwo{\Prj _{\Y ,r}^{\mathrm{L},2}} 
\DN\PrjYLone{\Prj _{\Y }^{\mathrm{L},1}} 
\DN\PrjYLtwo{\Prj _{\Y }^{\mathrm{L},2}}

\DN\yy{Y^{\star }} \DN\yyhat{\hat{X}^{\star }} 

\DN\chieR{\chi _{\epsilon , R}}

\DN\1{dx d\mugz }
\DN\3{ \R\ts\mathsf{S} , \1 }
\DN\8{\frac{\beta }{2}} 
\DN\9{\frac{2}{\beta }}
\DN\6{_{\epsilon , R}}
\DN\7{\overline{\chi }\6 }

\title{Self-diffusion constants of non-colliding  interacting Brownian motions in one spatial dimension} 

\TitleHead{Self-diffusion constants of non-colliding  interacting Brownian motions in $ \mathbb{R}$ }          
\author{\textsc{Hirofumi Osada}
\footnote{Graduate School of Mathematics, Kyushu University
Fukuoka 819-0395, Japan\newline e-mail: 
\texttt{osada@math.kyushu-u.ac.jp}}}
\AuthorHead{Hirofumi Osada}         
\classification{}
\support{Supported in part by a Grant-in-Aid for Scientific Research (KIBAN-A,
No. 24244010) of the Japan Society for the Promotion of Science.}  
\keywords{\textit{homogenization, self-diffusion constant, self-diffusion matrix, interacting Brownian motions, Kipnis--Varadhan theory, Dirichlet forms}}         
\VolumeNo{59}           
\YearNo{2016}           
\PagesNo{253--272}      
\setcounter{page}{253} 
\communication{Received January 31, 2016.
Revised May 12, 2016.}      
\begin{document}

\maketitle

\DN\0{ \eqref{:10e}, \eqref{:20c}--\eqref{:20e}, and \eqref{:20z}}

\begin{abstract}      
We prove that self-diffusion constants of interacting Brownian particles in $ \mathbb{R}$  always vanish if the particles do not collide with each other. 
We represent self-diffusion constants by additive functionals of reversible Markov processes as obtained in \cite{o.inv2}. 
\end{abstract}

\section{Introduction}\label{s:1}
We consider infinitely many Brownian particles 
$ \mathbf{X}=(X^i)_{i\in\mathbb{Z}}$ 
moving in $ \Rd $ with interaction $ \Psi $ and inverse temperature 
$ \beta > 0 $. 
Intuitively, $ \mathbf{X}$ is given by the infinite-dimensional stochastic differential equation (ISDE)  
\begin{align}\label{:10a}&\quad 
dX_t^i = dB_t^i -\frac{\beta }{2} \sum_{\jZi } \nabla \Psi (X_t^i-X_t^j) dt ,
\quad (i\in\mathbb{Z})
\end{align}
and  is called an interacting Brownian motion in infinite dimensions \cite{lang.1,lang.2,Sh,fritz,tane,o.isde,o-t.tail}. 
We set the configuration-valued process $ \mathsf{X} $, called  unlabeled dynamics, 
to be 
\begin{align}\label{:10b}&
\mathsf{X}_t = \sum_{i\in\mathbb{Z}} \delta_{X_t^i}
.\end{align}
The dynamics in the present paper are quite general and 
not necessarily given by ISDEs of the form \eqref{:10a}.  
We later present the unlabeled dynamics $ \mathsf{X} $ 
using Dirichlet form theory \eqref{:20b} where 
the labeled dynamics $ \mathbf{X}$ are an 
$ (\Rd )^{\Z }$-valued additive functional of $ \mathsf{X}$ \eqref{:20v}.

We suppose that the system is in equilibrium in the sense that 
\begin{align}&\notag 
\mathsf{X}_t \stackrel[\mathrm{law}]{}{=} \mathsf{X}_0 
\quad \text{ for all }t 
,\end{align}
and that $ \mathsf{X}$ is a $ \mu $-reversible diffusion, 
where $ \mu $ is the distribution of $ \mathsf{X}_0 $. 
Thus $ \mu $ is the equilibrium state of the unlabeled dynamics $ \mathsf{X}$. 
By definition, $ \mu $ is a probability measure on the configuration space $ \mathsf{S}$ over $ \Rd $: 
\begin{align}\label{:10d}&
\mathsf{S} = \{ \mathsf{s}=\sum_i \delta_{s_i}\, ;\, s_i \in \Rd  ,\, 
\mathsf{s}(\{ |s_i |\le r  \} ) < \infty \text{ for all } r \in \mathbb{N}) \} 
.\end{align}
We equip $ \mathsf{S}$ with the vague topology under which $ \mathsf{S}$ 
is a Polish space. 
Throughout the paper we assume that  $ \mu $ is translation invariant:
\begin{align}\label{:10e}&
\mu \circ \theta_x^{-1} = \mu \quad \text{ for each } x \in \Rd 
,\end{align}
where $ \map{\theta _x}{\mathsf{S}}{\mathsf{S}}$ is the shift 
$ \theta _x(\mathsf{s}) = \sum_i \delta_{s_i+x}$ for 
$ \mathsf{s} = \sum_i \delta_{s_i}$. 
By definition, $ \theta_{-x} = \theta_x^{-1}$, and 
$ \theta_x$ is a homeomorphism for each $ x $.

We tag a particle $ X^0 =\{ X_t^0 \} $, say, and study its asymptotic behavior. 
In particular, we investigate the diffusive scaling limit. 
The celebrated Kipnis--Varadhan theory  \cite{KV} 
asserts that tagged particles of reversible systems 
converge to (a constant multiple of)  the Brownian motion $ B $: 
\begin{align}&\notag 
\limz{\ve }\ve X_{\cdot /\ve ^2}^0  = \sigma (\mathsf{s}) B 
,\end{align}
where $ \sigma (\mathsf{s}) $ may depend on the initial configuration $ \mathsf{s}$ of 
 environment seen from the tagged particle, that is, 
$ \mathsf{s} = \sum_{i\not= 0} \delta_{X_0^i-X_0^0}$. 
The average $\alpha [\mu ] $ of $ \sigma ^t\sigma $ 
with respect to the reduced Palm measure $ \mu _0 = \mu (\cdot - \delta_0 | \mathsf{s}(\{ 0 \} )\ge 1 ) $ conditioned at the origin
is called a self-diffusion matrix (see \eqref{:35y}). 

It is known that $ \alpha [\mu ] $ is always positive definite if $ d \ge 2 $ and that 
the interaction $ \Psi $ is of Ruelle's class 
with hard core having positive volume \cite{o.p}. 
It is expected that self-diffusion matrices for point processes with Ruelle's class potentials are always positive definite in multiple dimensions. 
The only known degenerate example in multiple dimensions is Ginibre interacting Brownian motion, in which an infinite-particle system interacts via the two-dimensional Coulomb potential $ \Psi (x) = - \log |x|$ with $ \beta = 2 $ \cite{o.sub}. We remark that the two-dimensional Coulomb potential is not of Ruelle's class because it is unbounded at infinity.

In one dimension, there are degenerate examples such as 
hard rods (Harris \cite{harris}) and Dyson's model (Spohn \cite{sp.1,sp.2}),  
which have scaling orders such that 
$ O(t^{1/4})$ in \cite{harris} and $ O ((\log t)^{1/2})$ in \cite{sp.2}, respectively. 

The main purpose of this paper is to give a sufficient condition for the degeneracy of the self-diffusion constant in one dimension. We prove that $ \alpha [\mu ] = 0 $ if particles do not collide with each other (\tref{l:21}). Let 
$$ \Delta = \{ \mathbf{x}=(x_i)_{i\in\Z } \in \R ^{\Z } \, ;\, 
x_i < x_{i+1} \quad \text{ for all } i \in \Z \} 
.$$
Then the non-collision condition \eqref{:20f} requires that 
$ \mathbf{X}_t \in \Delta $ for all $ 0\le t < \infty $ for all labeled particles 
$ \mathbf{X}=(X^i)_{i\in\Z }$ that start at $ \mathbf{X}_0 \in \Delta $. 
The set $ \Delta $ is very tiny compared with the whole space 
$ \R ^{\Z }$, which is the state space of independent Brownian motions 
$ \mathbf{B}=(B^i)_{i\in\Z }$ and interacting Brownian motions $ \mathbf{X}$ 
with Ruelle's class potentials but without non-collision condition. 
Intuitively, such a small state space of interacting one-dimensional Brownian motions with the non-collision condition results in sub-diffusive behavior. 
Briefly, smallness of the state space implies sub-diffusivity. 
To some extent, this phenomenon resembles the sub-diffusivity of a simple random walk in the incipient infinite cluster, which is a random domain 
enjoying a fractal structure at the critical point of Bernoulli percolation.  

Another purpose of this paper is to give a statement of the main theorem 
in a more natural fashion than previously (\tref{l:37}). 
Traditionally, these problems are  stated for tagged particles $ \{X_t^0\}$ 
as functionals of the stationary environment process 
$ \mathsf{Y} = \{\sum_{i\not= 0} \delta_{X_t^i-X_t^0}\}$, 
with the reduced Palm measure $ \muz $ as the invariant probability measure 
\cite{De,gp,KV,o.p}. 
One statement of the result is that, for any  $ F \in C_b( \CRd )$, 
\begin{align}\label{:11p}& \quad \quad 
\limz{\epsilon} 
\muz (\{\mathsf{s}; 
|\mathsf{E}_{\mathsf{s}}^{\mathsf{Y}}
 [F(\epsilon X_{\cdot /\epsilon ^2}^0 )]  - 
E [F (\sigma (\mathsf{s}) B ) ] | \ge \kappa \} ) 
= 0 
\quad \text{ for each $ \kappa > 0 $}
.\end{align}
Here, the expectation $ \mathsf{E}_{\mathsf{s}}^{\mathsf{Y}} $ is with respect to the distribution $ \mathsf{P}_{\mathsf{s}}^{\mathsf{Y}} $ 
of the environment process 
$ \mathsf{Y} = \{\sum_{i\not= 0} \delta_{X_t^i-X_t^0}\}$ 
starting at $ \mathsf{s}$, and 
$ \muz $ is the initial distribution of the whole environment process 
$ \mathsf{Y} $. 
Furthermore, $ B = \{ B _t\} $ denotes the standard $ d $-dimensional Brownian motion and $ E [\cdot ]$ is the expectation with respect to $ B $. 
This reduction of the problem through the idea of 
\lq\lq tagged particle and environment seen from the tagged particle'' 
is a key idea in Kipnis-Varadhan theory for tagged particle problems. 

We define a label 
$ \lab (\mathsf{s}) = (\lab _i(\mathsf{s}))_{i\in\Z } = (s_i)_{i\in\Z }$ for 
$ \mu $-a.s.\ $ \mathsf{s} \in \mathsf{S}$, and construct the labeled process 
$ \mathbf{X} = (X^i)_{i\in\mathbb{Z}}= \lab_{\mathrm{path}}(\mathsf{X})$ from the unlabeled process $ \mathsf{X}=\sum_{i\in\Z } \delta_{X^i}$ 
(see \eqref{:20v}). 
A refinement of statement \eqref{:11p} is that, for each $ i \in \Z $, 
\begin{align}\label{:11q}& \quad \quad 
\limz{\epsilon} 
\mu (\{\mathsf{s}; 
|\mathsf{E}_{\mathsf{s}} [F(\epsilon X_{\cdot /\epsilon ^2}^i ) ] - 
E [F (\sigma (\mathsf{s}) B )]  | \ge \kappa \} ) 
= 0 
\quad \text{ for each $ \kappa > 0 $}
.\end{align}
Here, $ \mathsf{E}_{\mathsf{s}}$ is the expectation with respect to the distribution $ \mathsf{P}_{\mathsf{s}} $ of the original $ \mu $-reversible diffusion 
$ \mathsf{X}=\sum_{i\in\Z } \delta_{X^i}$ 
given by \eqref{:10b} 
starting at $ \mathsf{s}$. 

We thus state the theorem in terms of the scaling limit of 
each tagged particle $ X^i $ of the original unlabeled dynamics 
$ \mathsf{X} = \sum_i \delta_{X^i}$ (\tref{l:37}). 
To formulate the statement as \eqref{:11q}, we need to prepare a label $ \lab $ 
to choose tagged particles from the unlabeled system.

We remark that the equilibrium state for the unlabeled dynamics is $ \mu $. 
Note that $ \muz $ is not necessary absolutely continuous with respect to $ \mu $. 
Indeed, the Ginibre point process is an example where 
$ \mu $ and $ \muz $ are singular each other \cite{o-s.abs}. 
We nontheless deduce claim \eqref{:11q} from \eqref{:11p}.

\begin{exa}[Ruelle's class potentials]\label{d:11}
Typical examples of the interaction $ \Psi $ in ISDE \eqref{:10a} 
are Ruelle's class potentials, 
where the point processes $ \mu $ are translation invariant canonical Gibbs measures with interaction potential 
$ \Psi $ and inverse temperature $ \beta $. 
If $ d \ge 2$ and $ \Psi $ has a  hard core with positive volume, then 
$ \alpha [\mu ] $ is positive definite for any $ \beta \ge 0 $ \cite{o.p}. 
If $ \Psi $ does not have a hard core, then the positivity of $ \alpha [\mu ] $ 
has only been proved for $ \Psi \in C_0(\Rd )$ and sufficiently small $ \beta $ \cite{De}. 
This result is valid for $ d \ge 1 $. 
It is plausible that the positivity of $ \alpha [\mu ] $ holds for all Ruelle's class potentials 
without any hard core condition or restriction on $ \beta $ if $ d \ge 2 $. 
This problem is still open in this framework. 
\end{exa}

\begin{exa}[Ruelle's class potentials: $ d=1$]\label{d:11a}
Let $ \Psi $ be of Ruelle's class. 
Suppose that $ \Psi \in  C^3 (\Rd \backslash \{ 0 \} )$ 
is non-negative with bounded support 
and satisfies the non-collision condition \eqref{:20e}.  
The point processes $ \mu $ are translation invariant canonical Gibbs measures with interaction potential $ \Psi $ and inverse temperature $ \beta $. 
In \cite{sp.2}, Spohn showed that $ \alpha [\mu ] $ vanishes and 
that the correct scaling is $ \ve X_{t/\ve ^4}^0 $. 
\end{exa}

\begin{exa}[Hard rods in $ \mathbb{R}$]\label{d:12} 
Consider Brownian motions $ \mathbf{B}=(B^i)_{i\in\Z }$ 
in $ \mathbb{R}$, and pose the reflecting boundary condition for the set of 
 multiple points $ \Gamma = \cup_{i\not=j} \{ \mathbf{s}=(s_i); s_i=s_j \} $. 
 Harris proved that $ \alpha [\mu ] = 0 $ and that the correct order is $ \ve X_{t/\ve 4}^0 $ \cite{harris}. 
\end{exa}

\begin{exa}[Dyson's model]\label{d:13}
Another interesting example is Dyson's model in infinite dimensions \cite{sp.1}: 
\begin{align}\label{:1c5}&
dX_t^i = dB_t^i + \limi{R}\frac{\beta }{2} \sum_{j\not=i , |X_t^j|<R}
\frac{1 }{X_t^i-X_t^j }dt 
.\end{align}
The ISDE was solved for $ \beta = 1,2,4$ in \cite{o.isde,o.rm} (weak solution) and in \cite{o-t.tail} (pathwise unique, strong solution). 
The case $ \beta =1,2,4 $ fulfill the assumptions in \tref{l:21}. 
For general $ 1\le \beta < \infty $, 
Tsai \cite{tsai.14} solved \eqref{:1c5} at the level of 
non-equilibrium, pathwise unique strong solutions. 
He did not, however, prove the $ \mu $-reversibility of the associated unlabeled diffusions. 
Hence, these diffusions have not yet been shown to be associated with Dirichlet forms. 
The problem is thus still open for cases other than $ \beta = 1,2,4$. 

In \cite{sp.2}, Spohn proved that 
$ E[|X_t^0-X_0^0|^2] \sim \mathrm{const.} \log t$ as $ t \to \infty $. 
 This suggests that$ X_t \approx (\log t )^{1/2} $ as 
$ t \to \infty $, which is also an open problem. 
\end{exa}

The organization of the paper is as follows. 
In \sref{s:2}, we set up the problem and state the main result (\tref{l:21}). 
In \sref{s:3}, we prepare an invariance principle and state \tref{l:37}.  
In \sref{s:4}, we present a representation of self-diffusion constants. 
In \sref{s:5}, we prove that the self-diffusion constant vanishes under the non-collision condition in one-dimension. 
In \sref{s:6}, we complete the proof of \tref{l:21}.

\section{Set up and the main result}\label{s:2} 

In this section, we set up and state the main theorem (\tref{l:21}). 

Let $ \mathsf{S}$ be the configuration space over $ \Rd $ as in \eqref{:10d}. 
Let $ \mu $ be a point process on $ \Rd $ supported on a set consisting of infinitely many particles. 
We assume that $ \mu $ is translation invariant as in \eqref{:10e}.

A symmetric and locally integrable function 
$ \map{\rho ^n }{(\Rd )^n}{[0,\infty ) } $ is called 
the $ n $-point correlation function of a random point field $ \mu $ 
on $ \Rd $ with respect to the Lebesgue measure if $ \rho ^n $ satisfies 
\begin{align} & \notag 
\int_{A_1^{k_1}\ts \cdots \ts A_m^{k_m}} 
\rho ^n (x_1,\ldots,x_n) dx_1\cdots dx_n 
 = \int _{\Rd } \prod _{i = 1}^{m} 
\frac{\mathsf{s} (A_i) ! }
{(\mathsf{s} (A_i) - k_i )!} d\mu 
 \end{align}
for any sequence of disjoint bounded measurable sets 
$ A_1,\ldots,A_m \in \mathcal{B}(\Rd ) $ and some sequence of natural numbers 
$ k_1,\ldots,k_m $ satisfying $ k_1+\cdots + k_m = n $. 
If $ \mathsf{s} (A_i) - k_i  < 0$, according to our interpretation, 
${\mathsf{s} (A_i) ! }/{(\mathsf{s} (A_i) - k_i )!} = 0$ by convention.

For a function $ f $ on $ \mathsf{S}$, we denote by $ \check{f} $ 
 the symmetric function defined on a subset of 
$ \{\cup_{n=0}^{\infty} (\Rd )^n \}\cup (\Rd )^{\Z } $ such that 
$ \check{f}(s_1,\ldots,) = f (\mathsf{s})$, where  
$ \mathsf{s}=\sum_i \delta_{s_i} \in \mathsf{S}$. 
We say that a function $ f $ on $ \mathsf{S}$ is smooth if 
$  \check{f}$ is smooth, and local if $ f = f\circ \pi_{r}$ for some $ r $, 
where $ \map{\pi_r}{\mathsf{S}}{\mathsf{S}}$ such that 
$ \pi_r (\mathsf{s}) = \mathsf{s} (\cdot \cap \{ |x|\le r \} )$.

Let $ \mathcal{D}_{\circ} $ be the set of all smooth, local functions on 
$ \mathsf{S}$. 
Let $ \mathbb{D} $ be the square field on $ \mathsf{S}$ such that, 
for $ f,g \in \mathcal{D}_{\circ} $, 
\begin{align}\label{:20a}&
\mathbb{D}[f,g] (\mathsf{s})= \frac{1}{2} 
\Big\{ \sum_{i\in \mathbb{Z}} 
\PD{\check{f}}{s_i} \cdot \PD{\check{g}}{s_i} \Big\} (\mathsf{s})
.\end{align}
We use the square field $ \mathbb{D}$ in \eqref{:20a} to define a function $ F_{\mathbb{D}}$ from the space of point processes to the space of bilinear forms on the configuration space $ \mathsf{S}$. Indeed, we set 
\begin{align*}&
F_{\mathbb{D}} (\mu ) = (\Emu , \mathcal{D}_{\circ}^{\mu } ) 
,\end{align*}
where the bilinear form $(\Emu , \mathcal{D}_{\circ}^{\mu } ) $ 
on $ \Lmu $ is such that  
\begin{align}\label{:20b}&
\Emu (f,g)  = \int_{\mathsf{S}} \mathbb{D}[f,g] \mu (d\mathsf{s})
,\\ \notag &
\mathcal{D}_{\circ}^{\mu } = \{ f \in\mathcal{D}_{\circ} \, ;\, 
f \in \Lmu ,\,  \Emu (f,f) < \infty 
\} 
.\end{align}
If $ F_{\mathbb{D}} (\mu ) = (\Emu , \mathcal{D}_{\circ}^{\mu } ) $ is closable on $ \Lmu $, and its closure 
$ (\Emu , \mathcal{D}^{\mu } )$ is a quasi-regular Dirichlet form, then 
by the general theory of Dirichlet forms there exists a $ \mu $-reversible diffusion associated with the Dirichlet space 
 $ (\Emu , \mathcal{D}^{\mu } , \Lmu )$ \cite{mr}.  
Hence we assume that 
\begin{align}\label{:20c}&
\text{$ (\Emu , \mathcal{D}_{\circ}^{\mu } )$ is closable on 
$ \Lmu $}
,\\
\intertext{and }&\label{:20d}
\text{the $ n $-point correlation function $ \rho ^n $ 
of $ \mu $ is locally bounded for each $ n \in \mathbb{N}$}
.\end{align}
We can deduce from \eqref{:20d} that 
the closure $ (\Emu , \mathcal{D}^{\mu } )$ of 
$ (\Emu , \mathcal{D}_{\circ}^{\mu } )$ 
is a quasi-regular Dirichlet form \cite{o.dfa,o.rm}. 
Thus the associated diffusion $ (\mathsf{P},\mathsf{X})$  exists \cite{mr}, 
where $ \mathsf{P}=\{ \mathsf{P}_{\mathsf{s}} \}_{\mathsf{s}\in \mathsf{S}} $ is the family of diffusion measures and 
$ \mathsf{X}=\{ \mathsf{X}_t \}_{t\in[0,\infty)} $ denotes 
the canonical process. 
By construction, $ (\mathsf{P},\mathsf{X})$ is a $ \mu $-reversible diffusion. 

If $ \mu $ is a Poisson point process with Lebesgue intensity, then the associated diffusion is an $ \mathsf{S}$-valued Brownian motion $ \mathsf{B}=\sum_i \delta_{B^i}$. 
In some sense, this correspondence is natural. 
If $ \mu $ is a $ (0, \Psi )$-quasi-Gibbs measure with upper semi-continuous potential $ \Psi $ in the sense of \cite{o.rm}, 
then \eqref{:20c} is satisfied \cite{o.rm,o.rm2}. 
We also remark that a $ (0, \Psi )$-Gibbs measure is 
a $ (0, \Psi )$-quasi-Gibbs measure by definition. 
We present the ISDEs associated with these unlabeled diffusions at the end of this section.

We assume that %
\begin{align}\label{:20e}&
\mathrm{Cap}(\mathcal{N}_1 ) = 0
,\end{align}
where $ \mathcal{N}_1=\{ \mathsf{s}\in \mathsf{S}\, ;\, \mathsf{s}(\{ x \} ) \ge 2 \text{ for some } x \in \Rd \}  $ and 
$ \mathrm{Cap}$ is the 1-capacity of the Dirichlet space 
$ (\Emu , \mathcal{D}^{\mu } ,\Lmu )$. 
Assumption \eqref{:20e} is equivalent to 
\begin{align}\label{:20f}&
\mathsf{P}_{\mathsf{s}}( X_t^i\not=X_t^j \text{ for all }i\not=j ,\ 
t \in [0,\infty) ) = 0 
\quad \text{ for q.e.\! } \mathsf{s}
,\end{align}
where $ \mathsf{X}_t=\sum_i \delta_{X_t^i}$, and 
q.e.\! indicates quasi-everywhere (see \cite{mr,fot.2}). 
 
We refer to Inukai \cite{inu} for the necessary and sufficient conditions for 
\eqref{:20e} in terms of $ \Psi $. This result is for finite-particle systems, but can be used to obtain a precise sufficient condition for \eqref{:20e} 
because the limiting Dirichlet form is a decreasing limit of finite-particle Dirichlet forms \cite{o.dfa}. 
We also refer to \cite{o.col} for the non-collision property of unlabeled diffusions associated with determinantal point processes.

We remark that the translation invariance of $ \mu $ yields the non-explosion of 
each tagged particle $ X^i$ \cite[Theorem 2.5]{o.tp}: 
\begin{align}&\notag 
\mathsf{P}_{\mathsf{s}}( \sup_{t\in[0,T]} |X_t^i| < \infty 
\text{ for all } i \in \mathbb{Z},\ T \in [0,\infty) ) = 0 
\quad \text{ for q.e.\! } \mathsf{s}
.\end{align}

Let $ {\ulab}$ be a function on $ (\Rd )^{\Z }$ such that 
$ \ulab ((s_i)) = \sum_i \delta_{s_i}$ and let 
\begin{align}\label{:20u}&
 \mathsf{S}_{\mathrm{s.i}} = \{ \mathsf{s}\in \mathsf{S} \, ;\, \mathsf{s}(\{ x \} ) \le 1 \text{ for all } x ,\, \mathsf{s}(\Rd )=\infty \} 
.\end{align}
From \eqref{:20e} and the translation invariance of $ \mu $ we see that 
\begin{align}\label{:20p}&
\mu ( \mathsf{S}_{\mathrm{s.i}} ) = 1 
.\end{align}
Let $ \map{\lab }{\mathsf{S}_{\mathrm{s.i}} }{(\Rd )^{\Z }}$ 
be a measurable map such that 
$ \ulab\circ \lab = \mathrm{id}$. We call $ \lab $ a label and write 
$ \lab $ as $ \lab (\mathsf{s}) =(\lab _i(\mathsf{s}))_{i\in\Z }=(s_i)_{i\in\Z }$, 
where $ \mathsf{s} = \sum _{i\in\Z } \delta_{s_i}$.  

\smallskip

\noindent 
\textbf{Example: } 
Let $ \lab =(s_i)_{i\in\Z }$ be a label. The label $ \lab $ is well defined for all 
$ \mathsf{s}=\sum_i\delta_{s_i} \in  \mathsf{S}_{\mathrm{s.i}} $ from the following. 
\\
\thetag{1} 
When $ d=1$, a typical example of the label $ \lab $  is as follows: 
$s_{-1} < s_0 < s_1$ and 
\begin{align*}& 
\quad 
\text{
$ \cdots < s_{-2} < s_{-1} < 0 < s_1 < s_2 < \cdots $
}
.\end{align*}
\thetag{2} 
Another example for $ d \ge 1 $ is: 
\begin{align*}&
 |s_0| < |s_1| < |s_{-1}| < |s_2| < |s_{-2}| < \cdots 
.\end{align*}

\smallskip

We can lift the map $ \lab $ to $ \map{\lab_{\mathrm{path}}}
{C([0,\infty);(\Rd )^{\Z})}{C([0,\infty);(\Rd )^{\Z})}$ in an obvious fashion. 
Indeed, once $ \lab $ is given, the dynamics $ \mathsf{X}_t =\sum_i \delta_{X_t^i}$ can keep the initial label for all $ t \in [0,\infty)$ because the particles neither collide 
with each other nor explode. Hence we write 
\begin{align}\label{:20v}&
\mathbf{X}_t = (X_t^i)_{i\in\mathbb{Z}}= \lab_{\mathrm{path}}(\mathsf{X})_t
.\end{align}
We assume that a label $ \lab =(\lab _i(\mathsf{s}))_{i\in\Z }=(s_i)_{i\in\Z }$ 
is given and fix this throughout the paper. 
If $ \mathsf{X}_0 = \mathsf{s}$ and $ \mathbf{X}$ satisfies \eqref{:20v}, 
then $  \mathbf{X}_0 = (\lab _i(\mathsf{s}) )_{i\in\Z } $ by definition. 
We study the diffusive scaling limit of each tagged particle $ X^i $
of the labeled process 
$ \mathbf{X}=(X^i)_{i\in\Z }$. 

Let $ \mu _x =  \mu (\cdot -\delta_{x}| \mathsf{s}(\{ x\}) \ge 1 ) $ be the reduced Palm measure conditioned at $ x \in \Rd $. 
Let $ \mu ^{[1]} (dxd\mathsf{s})= 
\rho ^1(x)\mu _x (d\mathsf{s} )dx$ be the one-Campbell measure of $ \mu $. 
Note that $ \rho ^1 (x)$ is constant in $ x $ 
 because $ \mu $ is translation invariant by \eqref{:10e}.  
Let $ \nabla _x $ be the nabla in $ x\in \Rd $. 
We regard $ \mathbb{D}$ as a square field on $ C_0^{\infty}(\Rd ) \ot \dz $ 
in an obvious fashion. 
Let 
\begin{align*}&
 \E ^{[1]} (f,g) = \int_{\Rd \ts \mathsf{S}} \Big\{
\frac{1}{2} \nabla_xf \cdot \nabla_x g + \mathbb{D}[f,g] \Big\}
\mu ^{[1]} (dxd\mathsf{s})
,\\&
 \dz ^{[1]} = \{ f \in C_0^{\infty}(\Rd ) \ot \dz ;\, 
f \in \Lmuone ,\, \E ^{[1]} (f,f) < \infty \} 
.\end{align*}
We assume the following: 
\begin{align}\label{:20z}&
\text{$ ( \E ^{[1]} ,  \dz ^{[1]} )$ is closable on $ L^2(\Rd \ts \mathsf{S}, \mu ^{[1]} )$. }
\end{align}

Recall that from \eqref{:20c} and \eqref{:20d} we obtain a $ \mu $-reversible diffusion $ (\mathsf{P},\mathsf{X})$. 
Using the label $ \lab $, we can write $ \mathsf{X}_t = \sum_{i\in\Z } \delta_{X_t^i}$. 
We thus obtain the labeled process $ \mathbf{X} = (X^i)_{i\in\Z }$. 
Our main theorem is the following:
\begin{thm}	\label{l:21}
Assume \0, and assume that $ d = 1$. 
Then, for each $ i \in \mathbb{Z}$, we obtain 
\begin{align}&\notag 
\quad \quad \quad \quad 
\lim_{\epsilon \to 0} \epsilon X_{\cdot /\epsilon ^2}^i = 0 
\quad \text{  weakly in  $\CR $
under $\mathsf{P}_{\mathsf{s}} $ in $ \mu $-probability. }
\end{align}
That is, for any  $ F \in C_b( \CR )$ and  for each $ \kappa > 0 $, 
\begin{align}
&\notag 
\quad \quad 
\limz{\epsilon} 
\mu (\{\mathsf{s}; 
|\mathsf{E}_{\mathsf{s}} [F(\epsilon X_{\cdot /\epsilon ^2}^i ) ]  - 
F (0) | \ge \kappa \} ) 
= 0 
,\end{align}
where $ 0 $ in $ F(0)$ denotes the constant path with value $ 0$. 
\end{thm}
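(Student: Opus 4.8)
The plan is to reduce Theorem~\ref{l:21} to the vanishing of the self-diffusion constant $\alpha[\mu]$, which is what Section~\ref{s:5} establishes under the non-collision condition in one dimension, and then to transfer this to the stated convergence of each tagged particle $X^i$ via the invariance principle of Section~\ref{s:3} (\tref{l:37}). Concretely, the Kipnis--Varadhan machinery (as formulated in \eqref{:11p}--\eqref{:11q}) gives, for $\mu$-a.e.\ initial configuration $\mathsf{s}$, the convergence $\epsilon X^i_{\cdot/\epsilon^2} \to \sigma(\mathsf{s}) B$ weakly in $\CR$ under $\mathsf{P}_{\mathsf{s}}$, where $B$ is a standard one-dimensional Brownian motion and $\sigma(\mathsf{s})^2$ is the (scalar, since $d=1$) local self-diffusion coefficient whose $\muz$-average is $\alpha[\mu]$. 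Once one knows $\alpha[\mu]=0$, one gets $\sigma(\mathsf{s})=0$ for $\muz$-a.e.\ $\mathsf{s}$, hence (after transferring from $\muz$ to $\mu$ along the lines already indicated in the introduction, using the label $\lab$ to pass from the environment process $\Y$ to the original unlabeled dynamics $\mathsf{X}$) for $\mu$-a.e.\ $\mathsf{s}$, so that the limit law $F(\sigma(\mathsf{s})B)$ collapses to the constant path $F(0)$.

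First I would recall from Section~\ref{s:4} the representation of $\alpha[\mu]$ as a variational quantity over the Dirichlet form $\EY$ on $\Lmz$: $\alpha[\mu]$ is expressed through the $H_{-1}$-norm of the drift/position functional, i.e.\ an infimum of $\EY(f+\Phi, f+\Phi)$-type expressions over local functions $f$, where $\Phi$ encodes the displacement of the tagged particle. Second, using the non-collision hypothesis \eqref{:20e}--\eqref{:20f}, I would exploit the fact that on $\Delta$ the ordering of particles is preserved: write $X^i_t = X^i_0 + M^i_t + A^i_t$ in the Lyons--Zheng / Fukushima decomposition coming from the Dirichlet form, and observe that the displacement of the $i$-th particle is pinned between its neighbours. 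In one dimension, this monotone-interlacing structure lets one approximate the coordinate functional $s_0$ (the tagged particle's position in the environment coordinates) arbitrarily well in the $\EY$-energy norm by local functions that telescope across the gaps $s_{j+1}-s_j$ — essentially because the total displacement of the tagged particle equals a sum of gap-contributions that can be re-localized — forcing the relevant $H_{-1}$-norm, and hence $\alpha[\mu]$, to vanish. This step, making precise that \emph{smallness of the state space $\Delta$ implies that the position functional lies in the range of the generator in the $H_{-1}$ sense}, is the conceptual and technical heart; I expect it to be the main obstacle, since it requires careful handling of the infinite-volume limit of the approximating local functions and control of their energies uniformly, using \eqref{:20d} (local boundedness of correlation functions) and the closability assumptions \eqref{:20c}, \eqref{:20z}.

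Third, having obtained $\alpha[\mu]=0$, hence $\sigma=0$ $\muz$-a.e., I would invoke \tref{l:37} (the refined invariance principle stated in Section~\ref{s:3}) in the degenerate case: the scaled tagged particle converges weakly to $\sigma(\mathsf{s})B \equiv 0$, the constant zero path, in $\mu$-probability over $\mathsf{s}$. The passage from the $\muz$-statement \eqref{:11p} to the $\mu$-statement \eqref{:11q} — needed because $\muz$ need not be absolutely continuous with respect to $\mu$ — is handled by the label construction: for $\mu$-a.e.\ $\mathsf{s}$, the shifted configuration $\theta_{-\lab_i(\mathsf{s})}(\mathsf{s}-\delta_{\lab_i(\mathsf{s})})$ lies in the support of $\muz$, and the displacement $X^i_t - \lab_i(\mathsf{s})$ of the original dynamics coincides with the tagged-particle displacement of the environment process started from that shifted configuration; tightness (already part of the invariance principle) upgrades convergence of the displacement to convergence in $\CR$. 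Finally, for any $F\in C_b(\CR)$, dominated convergence against the bound $\|F\|_\infty$ turns the in-probability weak convergence into $\limz{\epsilon}\mu(\{\mathsf{s}: |\mathsf{E}_{\mathsf{s}}[F(\epsilon X^i_{\cdot/\epsilon^2})] - F(0)| \ge \kappa\}) = 0$, which is exactly the assertion of \tref{l:21}. The remaining routine points — measurability in $\mathsf{s}$, the independence of the conclusion from the particular label $\lab$, and the reduction across indices $i$ by translation invariance \eqref{:10e} — I would dispatch briefly at the end.
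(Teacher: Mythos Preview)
Your overall architecture is exactly the paper's: Theorem~\ref{l:21} is deduced in one line (Section~\ref{s:6}) by combining \tref{l:37} (the invariance principle in $\mu$-probability, which already handles the passage from $\muz$ to $\mu$ via the label) with \tref{l:50} (the vanishing $\alpha[\mu]=0$ in $d=1$ under non-collision). Since $\alpha[\mu]=\int \hat a\,d\muz$ with $\hat a\ge 0$, the limit $\sqrt{\hat a}\,B$ degenerates to the zero path and the conclusion follows.

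Where your proposal diverges is in the sketch of the core step, $\alpha[\mu]=0$. You frame it as an $H_{-1}$ variational problem and propose to ``approximate the coordinate functional $s_0$'' by functions that ``telescope across the gaps $s_{j+1}-s_j$''. Two issues. First, in environment coordinates the tagged particle sits at the origin, so there is no $s_0$ to approximate; what must be reached is the element $(1,0)$ in the quotient space $\dtY$ of Section~\ref{s:4}, i.e.\ a sequence $f_N\in\dY$ with $\Dsft f_N\to 1$ in $\Lmuz$ and $\EYtwo(f_N,f_N)\to 0$ (this is the criterion of \lref{l:42}). Second, a bare telescoping sum such as $s_N=\sum_{j\le N}(s_j-s_{j-1})$ has $\Dsft s_N=1$ but $\DDD[s_N,s_N]=\tfrac12$, which does \emph{not} tend to zero; the idea as stated would stall. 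The paper's argument is shorter and more elementary than you anticipate: take the \emph{averages}
\[
\varphi_N(\mathsf s)=\tfrac{1}{N}(s_1+\cdots+s_N),\qquad 0<s_1<s_2<\cdots,
\]
so that $\Dsft\varphi_N=1$ exactly while $\DDD[\varphi_N,\varphi_N]=\tfrac{1}{2N}\to 0$; hence $\tilde\varphi_N\to(1,0)$ in $(\EYt,\dtY)$ (\lref{l:52}, \lref{l:53}). The non-collision hypothesis \eqref{:20e} is used only to guarantee $\varphi_N\in\dY$ despite its discontinuity on $\mathcal N_1\cup\mathcal N_2$ (\lref{l:51}); no Fukushima decomposition, interlacing estimates, or uniform energy bounds from \eqref{:20d} are needed here.
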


The Dirichlet forms describing 
$( X^0, \sum_{i \not= 0 }\delta_{X^i-X^0}) $ and 
$ \sum_{i \not= 0 }\delta_{X^i-X^0}$ will be given 
in \sref{s:3}. Assumption \eqref{:20z} is necessary for this. 

We emphasize that our framework does not require any ISDE. 
Indeed, only a Dirichlet form constructing the unlabeled diffusion is sufficient. 

If the point process $ \mu $ satisfies the geometric condition below, 
then the unlabeled diffusion given by the Dirichlet form is a solution of the ISDE 
\cite{o.isde}. 
Suppose that $ \mu $ has a logarithmic derivative 
$ \mathsf{d}^{\mu} = \mathsf{d}^{\mu}(s,\mathsf{s}) $ 
in the sense of \cite{o.isde}. Then the labeled dynamics $ \mathbf{X}$ 
are described by the infinite-dimensional stochastic differential equation 
\begin{align}
&\notag 
dX_t^i = dB_t^i + \frac{1}{2} 
\mathsf{d}^{\mu } (X_t^i,\mathsf{X}_t^{\diamondsuit ,i})dt 
,\end{align}
where, for $ \mathbf{X}=(X_t^i)_i$, we set 
\begin{align}\notag 
&
\mathsf{X}_t^{\diamondsuit ,i} = \sum_{\jZi  } \delta_{X_t^j}
.\end{align}
If $ \mu $ is a canonical Gibbs measure with inverse temperature $ \beta $ 
and potential $ \Psi $, then 
\begin{align*}& \quad \quad \quad \quad 
\mathsf{d}^{\mu } (x,\mathsf{s}) = - \beta \sum_i \nabla _x \Psi (x,s_i)
\quad \quad (\mathsf{s}=\sum_i \delta_{s_i})
.\end{align*}
If $ \mu $ is a Ginibre point process or a Sine$ _\beta $ point process, then 
$ \mathsf{d}^{\mu } $ is given by 
\begin{align*}&
\mathsf{d}^{\mu } (x,\mathsf{s}) = \beta \limi{R}
\sum_{|s_i|<R} \frac{x-s_i}{|x-s_i|^2}
.\end{align*}
This justifies the intuition  such that the interaction potentials of these point processes are logarithmic function $ \Psi (x,y) = -\log |x-y| $. 

We remark that our framework \cite{o.inv2} 
is very general, and contains many examples beyond Gibbs measures and 
point processes with pairwise interactions. 
For example, if $ \mu $ is a distribution of the zero points of planar Gaussian analytic functions (GAF), then its logarithmic derivative would not be given by a two-body potential 
$ \Psi $. 
We can still apply our result to this model. 
We plan to study this problem in a forth coming paper. 
We refer to \cite{GP} for discussion of GAFs.

\section{Invariance principle and self-diffusion matrix}
\label{s:3}

In this section, we quote a general theorem on an invariance principle for additive functionals of reversible Markov processes from \cite{o.inv2}, and present 
a refinement corresponding to \eqref{:11q}. 

Throughout this section, we assume \0. 
That is, we make the assumptions in \tref{l:21} except $ d = 1 $. 
We suppress this in the statements of the lemmas in this section. 

Let $ (\mathsf{P},\mathsf{X})$ be the diffusion given by the Dirichlet form 
$ (\Emu , \mathcal{D}^{\mu } )$ on $ \Lmu $ as in \sref{s:2}. 
Let $ \lab $ be a label, and write $ \mathsf{X}=\sum_{i\in\Z } \delta _{X^i}$. 
Let $ \mathbf{X} = (X^i)_{i\in\Z }$ be the associated labeled dynamics. 

A standard device for the tagged particle problem for interacting Brownian motions 
is to introduce processes of the environment seen from the tagged particles 
\cite{De, gp,KV,o.inv1,o.inv2,o.p}. 
Following this, we define a change of coordinates for $ \mathbf{X}$ as follows. 
Let $ \mathbb{Z}^*=\Z \backslash \{ 0 \}$, and set 
\begin{align}\label{:30y}& 
X = X^{0} ,\quad Y^{i} = X^{i}-X^{0} ,\  (\iZs )
.\end{align}
Then $ X $ denotes the tagged particle and 
$ \mathbf{Y}=(Y^{i})_{\iZs }$ 
is the (labeled) environment seen from the tagged particle. 
Let $ \mathsf{Y}=\{ \mathsf{Y}_{t} \} $ 
be the unlabeled process associated with $ \mathbf{Y}=(Y^{i})_{\iZs }$:  
\begin{align}\label{:30z}&
 \mathsf{Y}_{t}= \sum_{\iZs } \delta_{Y^{i}_{t}}
= \sum_{\iZs } \delta_{X_{t}^{i}-X_{t}^{0}}
.\end{align}
Then $ \mathsf{Y}$ is the process 
representing the (unlabeled) environment seen from the tagged particle $ X=X^{0}$. 
We call $ \mathsf{Y}$ the environment process. 
We also call the pair $ (X , \mathsf{Y})$ 
the tagged particle and environment process.

\begin{rem}\label{r:30}
If $ \mathbf{X}$ is given by \eqref{:10a}, 
then from \eqref{:30y} we see that $ (X , \mathbf{Y} )$ 
is given by 
\begin{align}\notag 
dX_{t} &= dB_t^0 - \8 
 \sum_{j \in \Z ^*  } \nabla \Psi  (Y^{j}_{t})
dt 
,\\\notag  
dY^{i}_{t} &= 
  \sqrt{2} \, d\tilde{B}^i  
- \8  \nabla \Psi  (Y^{i}_{t}) dt 
 + \8 
 \sum_{j \in \Z ^*  } 
\nabla \Psi  (Y^{j}_{t}) dt 
 - \8 
 \sum_{j \in \Z ^*  } 
\nabla \Psi  (Y^{i}_{t} - Y^{j}_{t}) dt 
,\end{align}
where $ \{ \tilde{B}^i  \}_{\iZs } $ 
are $ d $-dimensional Brownian motions given by  
\begin{align}&\notag %
\tilde{B}_{t}^i = \frac{1}{\sqrt{2}} (B^{i}_{t} - B^{0}_{t}) 
.\end{align}
We remark that $ \{ \tilde{B}^i  \}_{\iZs } $ 
are not independent but only identically distributed random variables 
equivalent to standard Brownian motion. 
\end{rem}

Using \eqref{:30y} and \eqref{:30z}, we have constructed dynamics 
$ \mathbf{Y}$, $ (X,\mathbf{Y})$, $ \mathsf{Y}$, and $ (X , \mathsf{Y})$ from 
$ \mathsf{X}$. 
We now specify the Dirichlet forms associated with 
$ \mathsf{Y}$ and $ (X , \mathsf{Y})$. 
 
We remark that, although $ \mathbf{Y}$ and 
$ (X,\mathbf{Y})$ are also diffusions with state space 
$ (\Rd )^{\Z } $ and $\Rd \ts (\Rd )^{\Z } $, respectively, 
there exist no associated Dirichlet spaces 
because of the lack of suitable invariant measures. For example, 
if $ \mu $ is a Gibbs measure with interaction $ \Psi $, then 
such measures $ \widetilde{\mu}_0$ and $ dx \ts \widetilde{\mu}_0$ 
for $ \mathbf{Y}$ and $ (X , \mathbf{Y})$ are loosely given by 
\begin{align*}&
\widetilde{\mu}_0  = \frac{1}{\mathcal{Z} } 
\exp \{-\beta \Big( \sum_{i < j,\, i,j\in \Z } \Psi (y_i-y_j)-
\sum_{k\in\Z } \Psi (0-y_k) \Big)  \} \prod_{l\in\Z }dy_l
.\end{align*}
This cannot be justified because of the presence of 
the infinite product of Lebesgue measures $ \prod_{l\in\Z }dy_l$. 
In contrast, $ \mathsf{Y}$ and $ (X , \mathsf{Y})$ are diffusions 
with invariant measures $ \muz $ and $ dx \ts \muz $, respectively. 
As a result, they have associated Dirichlet spaces. 
This fact is key to analysis 
in the Dirichlet form version of Kipnis--Varadhan theory \cite{o.inv2}. 

Once the Dirichlet forms describing the processes 
$ \mathsf{Y}$ and  $ (X , \mathsf{Y})$ have been established, 
we can dispense with ISDE \eqref{:10a}, which yields the generality of our result. 
In fact, the process $ \mathbf{X}$ in \tref{l:21} 
is not necessary given by ISDE \eqref{:10a}. 
Thus our framework is much more general 
than the classical one in \cite{De} and \cite{gp}. 

\smallskip

Let $ \Dsft =(\Dsft _1,\ldots,\Dsft _d)$, where 
$ \map{\Dsft _k}{\dz }{\dz }$ is such that 
\begin{align}
&\notag 
\Dsft _kf (\mathsf{s}) = \limz{\epsilon} \frac{1}{\epsilon} 
\{ f (\theta_{\epsilon \mathbf{e}_k} (\mathsf{s} )) - f (\mathsf{s})\} 
,\end{align}
and $ \mathbf{e}_k$ is the $ k$th unit vector in $ \Rd $. We set 
\begin{align}
&\notag 
\Dsft [f,g] =   \half \Dsft f \cdot \Dsft g 
.\end{align}
Let $ \DDD $ be defined as in \eqref{:20a}. 
Let $\DY $ be the square field on $\dz $ such that  
\begin{align}
\label{:31y}&   
\DY [f,g] =  \Dsft [f,g] +  \, \DDD [f,g]
.\end{align}
Let $ (\EY ,\dYz )$ be the bilinear form such that   
\begin{align}
&\notag 
\EY (f,g) = \int_{\SSS }  \DY [f,g] d\mugz 
,\\ \notag &
\dYz = \{ f \in \Lmuz \cap \dz \, ;\,  \EY (f,f) < \infty \} 
.\end{align}

The next lemma is a special case of a result 
for translation invariant diffusions on $ \SSS $ in \cite{o.tp}, and 
gives a Dirichlet form for $ \Y $. 
\begin{lem}[{\cite[Th.\! 2.6 \thetag{1}, Th.\! 2.7 \thetag{2.33}]{o.tp}}] 
\label{l:31} \quad {} \\
\thetag{1} $ (\EY ,\dYz )$ is closable on $ L^2(\mathsf{S}, \mugz )$. 
\\
\thetag{2} $ \mathsf{Y} $ in \eqref{:30z} is a diffusion associated with 
$ (\EY , \dY  )$ on $ L^2(\mathsf{S}, \mugz )$, 
where $ (\EY , \dY  )$ is the closure of 
$ (\EY ,\dYz )$ on $ L^2(\mathsf{S}, \mugz )$. 
\end{lem}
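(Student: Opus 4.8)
The plan is to deduce the lemma by specializing the general theory of translation invariant diffusions on $\SSS $ developed in \cite{o.tp}; concretely, the hypotheses of \cite[Th.~2.6, Th.~2.7]{o.tp} are supplied by the standing assumptions \0. Let me indicate the mechanism behind each part.

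For \thetag{1}, the plan is to transfer the closability of $(\E ^{[1]},\dz ^{[1]})$ on $L^2(\RdT ,\muone )$ --- which is precisely assumption \eqref{:20z} --- to the closability of $(\EY ,\dYz )$ on $\Lmuz $. Since $\mu $ is translation invariant, $\rho ^1$ is constant and $\mu _x=\mu _0\circ \theta _x^{-1}$, so the map $T(x,\sss )=(x,\theta _{-x}\sss )$ carries $\muone $ onto the product measure $\rho ^1\, dx\ot \mugz $ and preserves the smooth--local structure because each $\theta _{-x}$ is a translation. For $\chi \in C_0^{\infty }(\Rd )$ and $g\in \dYz $ I would test the form with $f(x,\sss )=\chi (x)\, g(\theta _{-x}\sss )$, which lies in $\dz ^{[1]}$ (being smooth, local in $\sss $, and compactly supported in $x$). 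Since $\nabla _xf=(\nabla \chi )\, g-\chi \, \Dsft g$ and $\DDD [f,f]=\chi ^2\, \DDD [g,g]$, with $g$, $\Dsft g$, and $\DDD [g,g]$ read at $\theta _{-x}\sss $, and since $\int _{\Rd }\chi \, \partial _k\chi \, dx=\half \int _{\Rd }\partial _k(\chi ^2)\, dx=0$ annihilates the cross term, one is led to the identity
\[
\E ^{[1]}(f,f)=\rho ^1\Big\{\half \|\nabla \chi \|_{L^2(\Rd )}^2\, \|g\|_{L^2(\mugz )}^2+\|\chi \|_{L^2(\Rd )}^2\, \EY (g,g)\Big\}.
\]
Now, given $g_n\in \dYz $ with $g_n\to 0$ in $\Lmuz $ and $(g_n)$ $\EY $-Cauchy, the corresponding $f_n$ converge to $0$ in $L^2(\RdT ,\muone )$ and are $\E ^{[1]}$-Cauchy (apply the identity to differences, the three terms being controlled by $\|g_n-g_m\|_{L^2(\mugz )}$ and $\EY (g_n-g_m,g_n-g_m)$); closability of $(\E ^{[1]},\dz ^{[1]})$ then forces $\E ^{[1]}(f_n,f_n)\to 0$, and since the first term on the right above already vanishes, choosing $\chi \not\equiv 0$ gives $\EY (g_n,g_n)\to 0$. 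This proves \thetag{1}.

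For \thetag{2}, the closure $(\EY ,\dY )$ is quasi-regular --- this is inherited from the Campbell side and ultimately from the local boundedness \eqref{:20d} of the correlation functions, by the arguments of \cite{o.dfa,o.rm} --- so \cite{mr} produces a $\mugz $-reversible diffusion associated with $(\EY ,\dY )$. It then remains to identify this diffusion with the process $\Y _t=\sum _{\iZs }\delta _{X_t^i-X_t^0}$ constructed pathwise from $\mathsf{X}$ via the label $\lab $. Here I would use \eqref{:20e}: it ensures that the label is preserved for all $t$, so that $X^0$ and each $Y^i$ are genuine continuous additive functionals of $\mathsf{X}$ and $\Y _t$ is a measurable functional of $\mathsf{X}_t$ for q.e.\ starting configuration; applying Fukushima's decomposition to $f(\Y _t)$ for $f$ in the core $\dYz $, the Brownian motion driving the tagged particle contributes exactly the shift part $\Dsft [f,f]$ of the square field $\DY $ while the remaining Brownian motions contribute $\DDD [f,f]$, and the drift matches the generator of $(\EY ,\dY )$; equivalently, one transports the energy identity for $(\Emu ,\mathcal{D}^{\mu })$ through the Campbell/Palm correspondence. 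This identification is exactly the content of \cite[Th.~2.6, Th.~2.7]{o.tp}, which I would cite for the details.

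The main obstacle is the identification step in \thetag{2}. Because $\mugz $ need not be absolutely continuous with respect to $\mu $ --- it is singular for the Ginibre point process, as recalled in \sref{s:1} --- one cannot identify $\Y $ with the form diffusion simply by restricting functions from $(\SSS ,\mu )$; the identification has to be made on the one-Campbell measure $\muone $, on which $\mu $ appears as an average of Palm fibers and $\mugz $ as the Palm fiber at the origin, and this is precisely the framework in which \cite{o.tp} operates. A secondary technical point is to verify that $\Dsft $ is a well-defined derivation mapping $\dz $ into $\dz $, which rests on the translation invariance \eqref{:10e} and the smooth--local structure of $\dz $.
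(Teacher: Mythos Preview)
The paper itself gives no proof of this lemma: it is stated with the citation \cite[Th.~2.6, Th.~2.7]{o.tp} in the header and is simply quoted as a special case of the general results for translation invariant diffusions in \cite{o.tp}. Your proposal goes further than the paper by sketching the mechanism behind the citation, and in that sense it is consistent with---indeed more informative than---what the paper does; your concluding deferral to \cite{o.tp} matches the paper exactly.

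One technical point in your sketch of \thetag{1} deserves attention. You test $(\E ^{[1]},\dz ^{[1]})$ with $f(x,\sss )=\chi (x)\,g(\theta _{-x}\sss )$ and assert that this lies in $\dz ^{[1]}$. But $\dz ^{[1]}$ is defined in the paper as (the finite-energy part of) the \emph{algebraic} tensor product $C_0^{\infty }(\Rd )\otimes \dz $, and your $f$ is not a finite sum of elementary tensors: the dependence of $g(\theta _{-x}\sss )$ on $x$ is genuine. The cleanest fix is to carry out the change of variables first. Your map $T(x,\sss )=(x,\theta _{-x}\sss )$ transports $(\E ^{[1]},\dz ^{[1]},\muone )$ to $(\EXY ,C_0^{\infty }(\Rd )\otimes \dz ,\rho ^1\,dx\otimes \mugz )$, because $\nabla _x$ becomes $\nabla _x-\Dsft $ under $T$; this is exactly the content of \lref{l:32}. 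Now take the honest tensor $\tilde f(x,\sss )=\chi (x)\,g(\sss )$, which \emph{is} in the core, and your computation goes through verbatim:
\[
\EXY (\tilde f,\tilde f)=\rho ^1\Big\{\tfrac{1}{2}\|\nabla \chi \|_{L^2}^2\,\|g\|_{\Lmuz }^2+\|\chi \|_{L^2}^2\,\EY (g,g)\Big\},
\]
with the cross term killed by $\int \chi \,\nabla \chi \,dx=0$. The closability transfer then proceeds exactly as you wrote. This is presumably how the argument in \cite{o.tp} is organized (and is why \lref{l:31} and \lref{l:32} share the citation to \cite[Th.~2.6]{o.tp}).
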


We next specify the Dirichlet space associated with 
the coupled process $ ( X , \mathsf{Y})$. 
We naturally regard $ \nabla _x $ and $ \Dsft $ as operators on 
$ C_0^{\infty}(\Rd  ) \ot \dz $. 
For $  f , g \in  C_0^{\infty}(\Rd  ) \ot \dz $, we set 
\begin{align}
&\notag 
 (\nabla_x - \Dsft ) [ f , g ] = \frac{1}{2} (\nabla_x - \Dsft ) f \cdot (\nabla_x - \Dsft ) g 
,\\
&\notag 
\DXY [ f , g ] =  (\nabla_x - \Dsft ) [ f , g ]  +  \, \DDD [ f , g ]
,\\   
&\notag 
\EXY ( f , g ) = \int_{\Rd  \ts \SSS } \DXY [ f , g ]  \1  
.\end{align}
Applying a general result in \cite{o.tp} to translation invariant diffusions on $ \SSS $, we obtain: 
\begin{lem}[\cite{o.tp}, Th.\! 2.6] \label{l:32}
$ (\EXY , C_{0}(\Rd )\ot \dz )$ is closable on $ L^2( \3  )$. 
\end{lem}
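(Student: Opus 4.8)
The plan is to deduce the closability of $(\EXY, C_0^\infty(\Rd)\ot\dz)$ on $L^2(\R\ts\SSS,\1)$ from the already-assumed closability~\eqref{:20z} of $(\E^{[1]},\dz^{[1]})$ on $L^2(\Rd\ts\SSS,\muone)$, by exhibiting an explicit measure-preserving transformation of the state space $\Rd\ts\SSS$ that carries one bilinear form into the other up to a unitary equivalence that respects the respective cores. The geometric content is exactly the change of coordinates $(x,\sss)\mapsto(x,\theta_{-x}(\sss))$, which sends the \lq\lq absolute'' picture (particle at $x$ together with the configuration $\sss$, distributed as $\muone$) to the \lq\lq relative'' picture (tagged particle at $x$ together with environment seen from it, distributed as $dx\ts\muz$). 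First I would verify that this map $T(x,\sss)=(x,\theta_{-x}(\sss))$ is a homeomorphism of $\Rd\ts\SSS$ with inverse $(x,\sss)\mapsto(x,\theta_x(\sss))$, and that, because $\mu$ is translation invariant \eqref{:10e} and $\rho^1$ is constant, $T$ pushes $\muone(dx\,d\sss)=\rho^1\,dx\,\mu_x(d\sss)$ forward to (a constant multiple of) $dx\,d\muz$; here one uses that the reduced Palm measure $\mu_x$ at $x$ is the shift $\theta_{-x}$-image of the reduced Palm measure $\muz$ at the origin. Consequently $U f := f\circ T^{-1}$ is (up to the harmless constant $\rho^1$) a unitary from $L^2(\muone)$ onto $L^2(\3)$.

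The second step is the chain-rule computation for the carré-du-champ under $T$. For $f\in C_0^\infty(\Rd)\ot\dz$ write $\tilde f = f\circ T$, which again lies in (a space containing) $C_0^\infty(\Rd)\ot\dz$; one checks the operator identities $\nabla_x(\tilde f) = (\nabla_x f)\circ T + (\Dsft f)\circ T$ and $\Dsft(\tilde f\circ\theta_\bullet)$-type relations, so that the \lq\lq absolute'' gradient pair $\frac12\nabla_x f\cdot\nabla_x g + \DDD[f,g]$ transforms precisely into the \lq\lq relative'' pair $(\nabla_x-\Dsft)[\,\cdot\,,\cdot\,] + \DDD[\,\cdot\,,\cdot\,]$ appearing in $\DXY$, because differentiating $\tilde f(x,\sss)=f(x,\theta_{-x}\sss)$ in $x$ produces the extra $-\Dsft$ term while the internal square field $\DDD$ is invariant under the configuration shift $\theta_x$. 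Integrating this pointwise identity against $\muone$ and transporting via $T$ shows
\begin{align}\notag
\E^{[1]}(f,g) = \Ct^{-1}\,\EXY(Uf,Ug)\quad\text{for }f,g\in\dz^{[1]}\cap(C_0^\infty(\Rd)\ot\dz),
\end{align}
i.e. the two forms are unitarily equivalent on matching domains. Since $U$ maps the core $C_0^\infty(\Rd)\ot\dz$ (in the absolute picture) onto $C_0^\infty(\Rd)\ot\dz$ (in the relative picture)—using that $\theta_{\pm x}$ preserve smoothness and locality and that $C_0^\infty$ in the $x$-variable is preserved because $T$ acts trivially in $x$ while moving only the measure-valued coordinate—closability transfers: a form unitarily equivalent to a closable form via a unitary carrying one prescribed domain onto the other is itself closable. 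Invoking assumption~\eqref{:20z} then gives the claim.

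The main obstacle I anticipate is not the algebra of the chain rule but the careful bookkeeping at the level of the cores and the singular-support set: one must make sure that $T$ and $T^{-1}$ really do map $C_0^\infty(\Rd)\ot\dz$ onto itself (in particular that compact support in $x$ is preserved, which it is since $T$ fixes $x$, but smoothness in $\sss$ after the shift $\theta_{-x}$ needs the joint smoothness in $(x,\sss)$ of $\tilde f$), and that the null sets where the Palm disintegration is defined are handled so that the measure-transport identity $T_*\muone = \rho^1\,dx\,d\muz$ holds genuinely and not merely formally. A secondary point to state cleanly is the elementary functional-analytic lemma that closability is preserved under unitary equivalence with matching cores; this is standard but should be cited or spelled out in one line. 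Once these points are in place the result follows, and indeed this is exactly the mechanism by which \cite{o.tp} Th.~2.6 is designed to apply, so an alternative—and perhaps cleaner—route is simply to verify that the hypotheses of \cite[Th.~2.6]{o.tp} are met in the present translation-invariant setting and quote it directly, which is how I would actually write the final version.
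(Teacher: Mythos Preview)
The paper gives no proof of this lemma at all: it is stated with the citation \cite[Th.~2.6]{o.tp} and then used as a black box. Your closing remark---that the cleanest route is to verify the hypotheses of \cite[Th.~2.6]{o.tp} in the present translation-invariant setting and quote it---is precisely what the paper does.

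Your longer argument is not wrong in spirit; it is a reasonable outline of the mechanism that underlies the cited result, namely that the change of variables $T(x,\sss)=(x,\theta_{-x}\sss)$ transports $\mu^{[1]}$ to $\rho^1\,dx\,d\mu_0$ and intertwines the square fields $\tfrac12|\nabla_x|^2+\DDD$ and $(\nabla_x-\Dsft)[\,\cdot\,,\cdot\,]+\DDD$. The obstacle you yourself flag is genuine, though: $U$ does \emph{not} carry $C_0^\infty(\Rd)\ot\dz$ onto itself, since for $f=\phi\ot\psi$ one has $(Uf)(x,\sss)=\phi(x)\,\psi(\theta_x\sss)$, which is smooth and local but no longer a finite sum of tensor products. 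One therefore needs either to enlarge the core to a class stable under $T^{\pm1}$ (e.g.\ smooth local functions on $\Rd\ts\SSS$ with compact $x$-support) and argue separately that this larger class is still a core for both forms, or to run an approximation argument. This is exactly the sort of bookkeeping that \cite{o.tp} absorbs, which is why the paper---and, as you anticipate, your final version---simply cites it.
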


We denote by $ \PY _{\mathsf{s}} $ the distribution of the diffusion 
$ \mathsf{Y} = \{\Y _t \} $ starting at $ \mathsf{s}$ given by the Dirichlet form $ (\EY , \dY  )$ on $ L^2(\mathsf{S}, \mugz )$. 
We denote by $ \PXY  _{(x,\mathsf{s}) } $ the distribution of the diffusion 
$  (X,\Y ) = \{  (X_t,\Y _t) \} $ starting at $ (x,\mathsf{s}) $ given by the Dirichlet form $ (\EXY , \dXY  )$ on $ L^2( \3  )$.  
By the general theory of Dirichlet forms \cite{fot.2}, 
$ \PY _{\mathsf{s}} $ and $ \PXY  _{(x,\mathsf{s}) } $ are unique 
up to quasi-everywhere starting points. 
The next two lemmas show the existence of suitable versions of these diffusion measures. 

The next lemma explains the relationship between 
$ \mathsf{Y}$ and $ (X,\mathsf{Y})$ and recalls the identities involving 
$  \PY _{\mathsf{s} } $ and $ \PXY  _{(x,\mathsf{s}) } $. 
We set 
\begin{align*}&
X - X_0 = \{ X _t - X _0 \}_{t\in[0,\infty)} 
.\end{align*}
\begin{lem}[{\cite[Lem.\! 2.3]{o.inv2}}] \label{l:33} 
The diffusions $ \PY _{\mathsf{s} } $ and $ \PXY _{(x,\mathsf{s}) } $ 
satisfy the following: 
\begin{align}
&\notag 
 \PY _{\mathsf{s} } =  \PXY _{(x,\mathsf{s}) } ( \Y \in \cdot ) 
\quad \text{ for each $ x \in \Rd  $}
 ,\\
&\notag 
 \PXY _{(0 ,\mathsf{s}) }  ((X - X _0 ,\Y ) \in \cdot )= 
 \PXY _{(x,\mathsf{s}) } ((X - X _0 ,\Y ) \in \cdot ) 
\quad \text{ for each $ x \in \Rd  $}
.\end{align}
\end{lem}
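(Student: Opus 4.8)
The plan is to deduce both identities from a single structural fact — that $\PY$ and $\PXY$ can be realized \emph{simultaneously} as functionals of the underlying $\mu$-reversible diffusion $\mathsf{X}=\sum_{i\in\Z}\delta_{X^i}$ — together with the translation covariance of $\mathsf{X}$. First I would recall from \lref{l:31}, \lref{l:32}, and the constructions in \cite{o.tp} behind them, that for q.e.\ environment configuration $\mathsf{s}$ and each $x\in\Rd$ one tags the particle located at $x$ in the configuration $\theta_x\mathsf{s}+\delta_x$ — unambiguously for all $t$, thanks to the non-collision property \eqref{:20e} and the non-explosion of each tagged particle — and sets $X=\{X_t\}$ to be its trajectory with $X_0=x$, and $\Y_t=\theta_{-X_t}(\mathsf{X}_t-\delta_{X_t})=\sum_{i\not=0}\delta_{X_t^i-X_t^0}$. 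With this realization, $\PXY_{(x,\mathsf{s})}$ is the law of $(X,\Y)$ and $\PY_{\mathsf{s}}$ is the law of $\Y$ for $x=0$, so that $\PXY_{(0,\mathsf{s})}(\Y\in\cdot)=\PY_{\mathsf{s}}$ holds by construction.

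Next I would establish the translation covariance of $\mathsf{X}$. Because $\mu$ is translation invariant by \eqref{:10e} and the square field $\DDD$ of \eqref{:20a} satisfies $\DDD[f\circ\theta_x,g\circ\theta_x]=\DDD[f,g]\circ\theta_x$, the shift $\theta_x$ acts as a unitary on $\Lmu$ leaving the Dirichlet space $(\Emu,\mathcal{D}^{\mu})$ invariant; hence it intertwines the associated diffusion, so that for q.e.\ $\mathsf{w}$ the $\theta_x$-image of the law of the path $\mathsf{X}$ under $\mathsf{P}_{\mathsf{w}}$ equals its law under $\mathsf{P}_{\theta_x\mathsf{w}}$. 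Applying this with $\mathsf{w}=\mathsf{s}+\delta_0$, and using $\theta_x(\mathsf{s}+\delta_0)=\theta_x\mathsf{s}+\delta_x$, the particle tagged at $x$ in $\theta_x\mathsf{s}+\delta_x$ has trajectory $X_t=\tilde X_t+x$, where $\tilde X$ is the particle tagged at $0$ in $\mathsf{s}+\delta_0$. The additive constant $x$ then cancels in $X_t-X_0=\tilde X_t-\tilde X_0$ and in $\Y_t=\sum_{i\not=0}\delta_{X_t^i-X_t^0}=\sum_{i\not=0}\delta_{\tilde X_t^i-\tilde X_t^0}=\tilde\Y_t$. This yields at once the second identity, namely that $(X-X_0,\Y)$ has the same $\PXY_{(x,\mathsf{s})}$-law as $\PXY_{(0,\mathsf{s})}$-law, and, reading off only the $\Y$-coordinate, the first, $\PXY_{(x,\mathsf{s})}(\Y\in\cdot)=\PXY_{(0,\mathsf{s})}(\Y\in\cdot)=\PY_{\mathsf{s}}$.

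The main obstacle — which I would handle by citing \cite{o.tp} rather than reproving — is the identification, valid quasi-everywhere, of the concrete functionals $\Y_t$ and $(X_t,\Y_t)$ of $\mathsf{X}$ with the diffusions abstractly produced by the Dirichlet forms $(\EY,\dY)$ and $(\EXY,\dXY)$ of \lref{l:31} and \lref{l:32}; in particular the fact that $\Y$ by itself is Markov. This is exactly where the $\nabla_x-\Dsft$ form of $\DXY$ in \eqref{:31y} enters: informally, the generator of $\EXY$ sends a function of the configuration alone to a function of the configuration alone and agrees there with the generator of $\EY$, so the $\Y$-marginal decouples; making this rigorous requires coping with the infinite total mass of $dx\,d\muz$ on $\Rd\ts\SSS$, which is precisely what \lref{l:32} and \cite{o.tp} are built to handle. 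Apart from this input, the argument above is just bookkeeping of translations.
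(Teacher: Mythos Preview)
The paper does not supply its own proof of this lemma; it is quoted directly from \cite[Lem.~2.3]{o.inv2} without argument. Your approach---realizing $(X,\Y)$ as a functional of the translation-covariant unlabeled diffusion $\mathsf{X}$ started from $\theta_x\mathsf{s}+\delta_x$, and then observing that the additive shift $x$ cancels in both $X-X_0$ and in the differences $X^i-X^0$ defining $\Y$---is the natural one and is correct. You have also correctly isolated the one genuinely nontrivial ingredient, namely the q.e.\ identification of these concrete functionals of $\mathsf{X}$ with the diffusions abstractly produced by the Dirichlet forms $(\EY,\dY)$ and $(\EXY,\dXY)$, and deferred it to \cite{o.tp}; this is precisely the content the paper imports via \lref{l:31}, \lref{l:32}, and \lref{l:34}. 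One minor slip: the $(\nabla_x-\Dsft)$ structure you invoke for $\DXY$ is not equation \eqref{:31y} (which defines $\DY$) but the unnumbered display defining $\DXY$ just before \lref{l:32}.
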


We next clarify the relationship between the original diffusion $ \mathsf{X}$ 
and the diffusion  $ (X,\mathsf{Y})$. 
Let $ \mathsf{S}_{\mathrm{s.i}} $ be defined as in \eqref{:20u}, and let 
\begin{align} 
&\notag 
\mathsf{S}_x = \{ \mathsf{s} \in  \mathsf{S}_{\mathrm{s.i}} ; 
\mathsf{s}(\{ x \} ) = 1 \} 
.\end{align}
For $ \mathsf{s}\in \mathsf{S}_x$ 
 and a label $ \lab (\mathsf{s}) = (s_i)_{i\in\Z }$, 
we set $ i(\mathsf{s},x) \in \Z $ such that 
\begin{align}\label{:34z}&
\text{$ s_{i(\mathsf{s},x)} = x$.  }
\end{align}
Let 
$ \mathsf{P} = \{ \mathsf{P}_{\mathsf{s}}\}_{\mathsf{s}\in \mathsf{S}} $ 
be the distribution of the original unlabeled diffusion $ \mathsf{X}$ 
given by the Dirichlet form 
$ (\Emu , \mathcal{D}^{\mu } )$ on $ \Lmu $ as before. 
Because $ \mu $ is translation invariant, 
there is a version of $ \mathsf{P}_{\mathsf{s}}$ such that 
\begin{align*}& \quad \quad \quad \quad 
\mathsf{P}_{\mathsf{s}} \circ \theta_x^{-1} = 
\mathsf{P}_{\theta_x(\mathsf{s})} \quad \text{ for all } x\in \Rd \text{ and } 
\mathsf{s} \in \mathsf{S}
.\end{align*}
\begin{lem}[{\cite[Th.\! 2.7 \thetag{2.32}]{o.tp}}] \label{l:34} 
Let $ i(\mathsf{s},x)$ be defined as above. Then, for each $ x \in \Rd $ and 
$ \mathsf{s}\in \mathsf{S}_x $,   
(a version of) $ \PXY _{(x,\mathsf{s})}$ satisfies 
\begin{align}\label{:34a}&
\mathsf{P}_{\mathsf{s}} (X^{i(x,\mathsf{s})} \in \cdot ) = 
\PXY _{(x,\theta_{-x} (\mathsf{s}-\delta_x)) } (X \in \cdot )
.\end{align}
\end{lem}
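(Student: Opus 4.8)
The plan is to realise the tagged‐particle‐and‐environment dynamics as a deterministic, bijective recoordinatisation of the unlabeled diffusion $\mathsf{X}$, and then to identify the resulting law with $\PXY_{(x,\sx )}$ by exhibiting its Dirichlet form as $(\EXY ,\dXY )$ of \lref{l:32} and invoking uniqueness of the associated diffusion.

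Fix $x\in\Rd $ and $\mathsf{s}\in\mathsf{S}_x$, and put $k=i(x,\mathsf{s})$. First I would record the structural inputs. Under $\mathsf{P}_{\mathsf{s}}$ the non-collision property \eqref{:20f} and the non-explosion of each tagged particle \cite[Th.~2.5]{o.tp} ensure that $t\mapsto X_t^k$ is a well-defined continuous $\Rd $-valued path with $X_0^k=s_k=x$, and that the label is preserved in time, so that $X^k$ is intrinsically the trajectory of the point of $\mathsf{X}$ issued from $x$, independently of the choice of $\lab $. Define the pathwise map $G_x$ by $G_x(\mathsf{X})_t=(X_t^k,\ \theta_{-X_t^k}(\mathsf{X}_t-\delta_{X_t^k}))$; then $G_x(\mathsf{X})_0=(x,\sx )$, and since $\mathsf{X}_t$ is recovered from $G_x(\mathsf{X})_t=(\xi _t,\mathsf{e}_t)$ by $\mathsf{X}_t=\delta_{\xi _t}+\theta_{\xi _t}(\mathsf{e}_t)$, the map $G_x$ is a bicontinuous bijection of the relevant path spaces. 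Consequently $\mathsf{P}_{\mathsf{s}}\circ G_x^{-1}$ is a diffusion measure on $C([0,\infty);\Rd \ts \mathsf{S})$ started at $(x,\sx )$, and its first marginal is exactly $\mathsf{P}_{\mathsf{s}}(X^k\in\cdot )$.

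It then remains to show that $\mathsf{P}_{\mathsf{s}}\circ G_x^{-1}$ is a version of $\PXY_{(x,\sx )}$, which I would do in two steps by identifying its Dirichlet form. Passing to the dynamics of $\mathsf{X}$ endowed with a marked particle — the $\muone $-reversible diffusion associated with $(\E ^{[1]},\dz ^{[1]})$, whose closability is hypothesis \eqref{:20z} — lifts the $\mu $-reversibility of $\mathsf{X}$; and the fibrewise shift $(x,\mathsf{t})\mapsto (x,\theta_{-x}\mathsf{t})$ sends $\muone $ to $\rho ^1\, dx\ts \muz $, because $\mu _x$ is the pushforward of $\muz $ by $\theta_x$ by translation invariance, while differentiating $\theta_{-x}$ in $x$ turns $\nabla_x$ into $\nabla_x-\Dsft $ and hence turns the square field $\frac12\nabla_x f\cdot\nabla_x g+\DDD [f,g]$ of $\E ^{[1]}$ into $(\nabla_x-\Dsft )[f,g]+\DDD [f,g]=\DXY [f,g]$. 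Composing these two maps gives $G_x$ on initial configurations, so $\mathsf{P}_{\mathsf{s}}\circ G_x^{-1}=\PXY_{(x,\sx )}$, the distribution, started at $(x,\sx )$, of the diffusion with Dirichlet form $(\EXY ,\dXY )$ on $L^2(\3 )$. Reading off the first coordinate gives \eqref{:34a}.

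The main obstacle is the familiar defect of Dirichlet form theory: the associated diffusion is unique only up to quasi-every starting point, whereas \eqref{:34a} is asserted for every $x\in\Rd $ and every $\mathsf{s}\in\mathsf{S}_x$. I would handle this, as in \cite[Th.~2.7]{o.tp}, by fibering the statement over $x$ and using the translation-covariant version $\mathsf{P}_{\mathsf{s}}\circ\theta_x^{-1}=\mathsf{P}_{\theta_x(\mathsf{s})}$ to transfer the identity off any $\mu $-null set, the latter being absorbed into the already $\mu $-negligible set where $\lab $ and $G_x$ are undefined. Since \eqref{:34a} is precisely \cite[Th.~2.7 \thetag{2.32}]{o.tp} in the translation-invariant case, one may simply invoke that result; the effective hypotheses are \eqref{:20z}, \lref{l:32}, the non-collision assumption \eqref{:20e}, and the non-explosion of tagged particles.
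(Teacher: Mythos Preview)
The paper does not prove this lemma at all: it is quoted verbatim from \cite[Th.~2.7~\thetag{2.32}]{o.tp}, with no argument given. Your sketch correctly identifies this at the end and outlines a plausible route to the result (recoordinatise via the bijection $G_x$, identify the Dirichlet form as $(\EXY ,\dXY )$ by the change of variables $\nabla_x\mapsto\nabla_x-\Dsft $, then invoke uniqueness up to q.e.\ starting points and use translation covariance to handle the exceptional set), which is indeed the strategy of \cite{o.tp}; but as far as comparison with the present paper goes, your final sentence---simply invoke \cite[Th.~2.7~\thetag{2.32}]{o.tp}---already matches exactly what the paper does.
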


We use an invariance principle obtained in \cite{o.inv2}. 
Applying \cite[Th.\! 1, Lem.\!  5.5]{o.inv2} 
to $ \PY $ and $ \PXY $, we obtain: 
\begin{lem}\label{l:35}
There exists a non-negative definite matrix-valued function $ \hat{a}$ 
such that, for each $ x $, 
\begin{align}\label{:35a}& \quad \quad \quad 
\limz{\ve } (\ve \X _{\cdot /\ve ^2 }  -  \ve \X _{0/\ve ^2 })
=  \sqrt{\hat{a}(\sss )} B  
\quad \text{ in law in $ C([0,\infty);\Rd )$}
\end{align}
under $  \PXY _{(x ,\mathsf{s}) } $ in 
$ \mugz $-probability. 
That is, for any $ F \in C_b(  C([0,\infty);\Rd ) )$ and each $ \kappa > 0 $, 
\begin{align}\label{:35b}& \quad \quad 
\limz{\epsilon} 
\mugz (\{\mathsf{s}; 
|\mathsf{E}^{X\mathsf{Y}} _{(x ,\mathsf{s})} 
[F(\ve \X _{\cdot /\ve ^2 }  -  \ve \X _{0/\ve ^2 }) ]  -
E [F ( \sqrt{\hat{a}(\sss )} B) ] 
| \ge \kappa \} ) 
= 0 
.\end{align} 
\end{lem}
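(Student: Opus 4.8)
The plan is to obtain \eqref{:35a}--\eqref{:35b} as a direct application of the abstract invariance principle for additive functionals of reversible Markov processes established in \cite[Th.\! 1]{o.inv2}, combined with the transfer lemma \cite[Lem.\! 5.5]{o.inv2}. First I would fix the correspondence between our data and the hypotheses of that theorem. By \lref{l:31}, the environment process $\mathsf{Y}$ in \eqref{:30z} is a $\mugz$-reversible diffusion with Dirichlet space $(\EY,\dY,\Lmuz)$, and by \lref{l:32} --- which is the change-of-coordinates reformulation \eqref{:30y} of assumption \eqref{:20z}, using translation invariance \eqref{:10e} --- the coupled process $(X,\mathsf{Y})$ is governed by the closable form $(\EXY,\dXY)$ on $L^2(\R\ts\SSS,\,dx\,d\mugz)$. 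The object whose diffusive scaling we study is the displacement $X-X_0=\{X_t-X_0\}$ of the tagged particle; by \lref{l:33} its $\PXY_{(x,\mathsf{s})}$-law is independent of $x$, so $X-X_0$ is a genuine additive functional of the environment process $\mathsf{Y}$, which is the setting of \cite[Th.\! 1]{o.inv2}.

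Next I would verify the hypotheses of \cite[Th.\! 1]{o.inv2}. The decomposition $\DXY[f,g]=(\nabla_x-\Dsft)[f,g]+\DDD[f,g]$ of \sref{s:3} shows that, in the Fukushima-type decomposition $X_t-X_0=M_t+N_t$, the martingale part $M$ has a constant sharp bracket, namely that of a standard Brownian motion, while the zero-energy part $N$ is the Kipnis--Varadhan corrector; thus the only substantive requirement is that the drift generating $N$ lie in the negative-order space $H_{-1}$ attached to $(\EY,\dY)$. This --- together with the localization needed to handle the non-local coordinate function $x\mapsto x$ --- is exactly what closability of the one-Campbell form \eqref{:20z} and the local boundedness \eqref{:20d} of the correlation functions supply within the framework of \cite{o.inv2}. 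With the hypotheses in place, \cite[Th.\! 1]{o.inv2} produces a non-negative definite matrix-valued function $\hat a$ on $\SSS$ and the convergence \eqref{:35a} in $\mugz$-probability; \cite[Lem.\! 5.5]{o.inv2}, applied together with \lref{l:33} and \lref{l:34} relating $\PXY_{(x,\mathsf{s})}$ to $\PY_{\mathsf{s}}$ and to the original diffusion, then upgrades this to the stated form \eqref{:35b}, uniformly in the reference point $x$.

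The delicate point --- and the reason the limiting coefficient is a genuine function $\hat a(\sss)$ rather than a constant, and the convergence is asserted only in $\mugz$-probability rather than almost surely or under the stationary law --- is that $\mugz$ need not be absolutely continuous with respect to $\mu$ and the environment process $\mathsf{Y}$ need not be ergodic, so one must control the corrector and run the martingale central limit theorem simultaneously on each ergodic component. This analytic heart of the matter is, however, carried out once and for all in \cite{o.inv2}; in the present proof it remains only to identify $(\PY,\PXY)$ with the input of that theorem, which the lemmas of \sref{s:3} render routine. The genuinely hard part of the paper lies not in \lref{l:35} but in the later exploitation of the one-dimensional non-collision structure to show that $\hat a=0$ holds $\mugz$-a.s., whence $\selfmu=0$.
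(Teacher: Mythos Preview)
Your proposal is correct and follows essentially the same route as the paper: the paper states \lref{l:35} as a direct application of \cite[Th.\! 1, Lem.\! 5.5]{o.inv2} to $\PY$ and $\PXY$, with no further proof, and you have filled in the verification of hypotheses along the same lines. One small inaccuracy: \lref{l:34} is not needed for \lref{l:35} itself --- it relates $\PXY$ to the original diffusion $\mathsf{P}$ and is only invoked later, in the proof of \lref{l:36}.
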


From \lref{l:35}, 
we introduce the self-diffusion matrix $ \selfmu $ given by 
\begin{align}\label{:35y}&
 \selfmu  = \int_{\SSS } \hat{a} (\sss )\mugz (d\sss )
.\end{align}
\begin{lem} \label{l:36}
Let $ x \in \Rd $ and set $ \hatmux  = \mu (\cdot | \mathsf{s}(\{ x \} )=1 )$. 
Then 
\begin{align}
&\notag 
\quad \quad \quad 
\limz{\ve } 
 \ve X_{\cdot /\ve ^2}^{i(x,\mathsf{s})} =  \sqrt{\hat{a}(\sx )} B 
\quad \text{ in law in $ C([0,\infty);\Rd )$}
\end{align}
under $  \PP _{\mathsf{s}} $ in $ \hatmux  $-probability. 
That is, for any $ F \in C_b(  C([0,\infty);\Rd ) ) $ and each $ \kappa > 0 $, 
\begin{align}\label{:36b}& \quad \quad 
\limz{\epsilon} \hatmux   (\{\mathsf{s}; |\mathsf{E}_{\mathsf{s}} 
[F(\ve \X _{\cdot /\ve ^2 }^{i(x,\mathsf{s})}) ]  - E [F ( \sqrt{\hat{a}(\sx )} B ) ] 
| \ge \kappa \} ) 
= 0 
.\end{align} 
\end{lem}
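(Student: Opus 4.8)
The plan is to deduce \lref{l:36} from \lref{l:35} by transporting the convergence statement along the identity \eqref{:34a} of \lref{l:34}, together with the disintegration of $\hatmux$ over the environment process via Palm measures. First I would fix $x\in\Rd$ and note that $\hatmux = \mu(\cdot\mid \mathsf{s}(\{x\})=1)$ is supported on $\mathsf{S}_x$, so that $i(\mathsf{s},x)$ is well defined $\hatmux$-a.s. By \lref{l:34}, for $\mathsf{s}\in\mathsf{S}_x$ we have $\mathsf{P}_{\mathsf{s}}(X^{i(x,\mathsf{s})}\in\cdot) = \PXY_{(x,\theta_{-x}(\mathsf{s}-\delta_x))}(X\in\cdot)$. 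Hence the inner expectation rewrites as
\begin{align}\notag
\mathsf{E}_{\mathsf{s}}[F(\ve X^{i(x,\mathsf{s})}_{\cdot/\ve^2})] = \mathsf{E}^{X\mathsf{Y}}_{(x,\sx)}[F(\ve X_{\cdot/\ve^2})]
,\end{align}
where I use the shorthand $\sx = \theta_{-x}(\mathsf{s}-\delta_x)$ already introduced in the macros. The key measure-theoretic point is that the pushforward of $\hatmux$ under the map $\mathsf{s}\mapsto \theta_{-x}(\mathsf{s}-\delta_x)$ is exactly the reduced Palm measure $\muz$ (here using translation invariance \eqref{:10e} to identify $\mu_x$ with $\theta_{-x}$-pushforward of the Palm measure at $x$, and then the reduced Palm measure $\muz$). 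So a statement holding $\muz$-a.s.\ in the variable $\sss$ transfers to a statement holding $\hatmux$-a.s.\ in the variable $\mathsf{s}$ with $\sss$ replaced by $\sx$.

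Next I would invoke \lref{l:35}, but with one adjustment: \eqref{:35a} gives convergence of the \emph{centered} process $\ve X_{\cdot/\ve^2} - \ve X_{0/\ve^2}$, whereas \lref{l:36} asserts convergence of the uncentered $\ve X^{i(x,\mathsf{s})}_{\cdot/\ve^2}$. Under $\PXY_{(x,\mathsf{s})}$ the starting point of $X$ is the deterministic value $x$, so $\ve X_{0/\ve^2} = \ve x \to 0$ as $\ve\to0$; thus the centered and uncentered processes differ by the deterministic shift $\ve x$, which converges uniformly on compacts to $0$. Since $F\in C_b(\CR)$ and the shift is deterministic and vanishing, $\mathsf{E}^{X\mathsf{Y}}_{(x,\sx)}[F(\ve X_{\cdot/\ve^2})]$ and $\mathsf{E}^{X\mathsf{Y}}_{(x,\sx)}[F(\ve X_{\cdot/\ve^2} - \ve x)]$ have the same limiting behavior; more precisely, for any $\kappa>0$ the set where one exceeds the threshold is, for small $\ve$, contained in the set where the other exceeds $\kappa/2$ (using uniform continuity of $F$ composed with the shift map, or a sup-norm estimate). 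So applying \eqref{:35b} with the function $F$ and the starting point $x$, and then performing the change of variables $\sss = \sx$ described above, yields \eqref{:36b}.

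Finally I would assemble: combining the rewriting of the inner expectation, the identification $\hat a(\sx)$ matching the $\hat a(\sss)$ appearing in \lref{l:35} under the Palm correspondence, and the vanishing-shift argument, the quantity
\begin{align}\notag
\hatmux(\{\mathsf{s}; |\mathsf{E}_{\mathsf{s}}[F(\ve X^{i(x,\mathsf{s})}_{\cdot/\ve^2})] - E[F(\sqrt{\hat a(\sx)}B)]| \ge \kappa\})
\end{align}
equals, up to the harmless $\ve x$ correction, the left-hand side of \eqref{:35b} evaluated at $\muz$, which tends to $0$. This gives \eqref{:36b} and hence the in-law-in-probability convergence asserted in \lref{l:36}. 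The main obstacle I anticipate is the careful bookkeeping of which version of the diffusion measures $\PXY_{(x,\cdot)}$ one uses — \lref{l:34} holds only for a suitable version and only for q.e.\ starting point, so I need to check that the exceptional sets (of capacity zero, hence $\mu$-null by \eqref{:20d}-type absolute continuity of capacity with respect to $\mu$) do not interfere after conditioning on $\mathsf{s}(\{x\})=1$; this requires knowing that $\hatmux$ charges no set of zero capacity, which follows from the general theory since $\hatmux \ll \mu$ and $\mu$-null sets of capacity zero are negligible. The second, more routine obstacle is making the ``deterministic vanishing shift does not affect weak convergence in probability'' step fully rigorous uniformly in the random environment, but this is a standard consequence of $F$ being bounded and continuous on $\CR$ with the compact-uniform topology.
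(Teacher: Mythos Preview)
Your argument is correct and follows exactly the route the paper takes: rewrite the inner expectation via \eqref{:34a}, identify the pushforward $\hatmux \circ \{\mathsf{s}\mapsto\theta_{-x}(\mathsf{s}-\delta_x)\}^{-1} = \muz$, absorb the deterministic shift $\ve x\to 0$, and invoke \eqref{:35b}. One caveat in your closing remarks: $\hatmux$ is \emph{not} absolutely continuous with respect to $\mu$ (it conditions on the $\mu$-null event $\mathsf{s}(\{x\})=1$), so the exceptional-set issue is handled not by absolute continuity but by the fact that \lref{l:34} already furnishes a version of $\PXY$ valid for every $x$ and every $\mathsf{s}\in\mathsf{S}_x$.
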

\begin{proof} 
Note that 
$ \ve X_{0/\ve ^2}^{i(x,\mathsf{s})}=x $ for $ \mathsf{P}_{\mathsf{s}}$-a.s.\ and 
$ \ve X_{0/\ve ^2}=x $ for 
$ \PXY _{(x,\theta_{-x} (\mathsf{s}-\delta_x)) }$-a.s.. 
From this and \eqref{:34a} in \lref{l:34}, we see that 
\begin{align}\label{:36d}
\mathsf{P}_{\mathsf{s}} \circ 
( \ve X_{\cdot /\ve ^2}^{i(x,\mathsf{s})} - 
\ve X_{0/\ve ^2}^{i(x,\mathsf{s})} )^{-1}
 &= 
\PXY _{(x,\theta_{-x} (\mathsf{s}-\delta_x)) } 
\circ ( \ve X_{\cdot /\ve ^2} - \ve X_{0/\ve ^2})^{-1}
.\end{align}
We easily see that 
\begin{align}\label{:36e}&
\hatmux   \circ \{ \theta_{-x} (\mathsf{s}-\delta_x) \} ^{-1} 
= \muz 
.\end{align}
Using \eqref{:36d} and \eqref{:36e} and applying \eqref{:35b} in \lref{l:35}, 
we obtain, for each $ x $, 
\begin{align}\label{:36f}&
\limz{\ve }
\hatmux   (\{\mathsf{s}; \left|\mathsf{E}_{\mathsf{s}} 
[F(\ve \X _{\cdot /\ve ^2 }^{i(x,\mathsf{s})}) ]  - E [F ( \sqrt{\hat{a}(\sx )} B ) ] 
\right| \ge \kappa \} ) 
\\ \notag =& \limz{\ve }
\hatmux   (\{\mathsf{s}; \left|\mathsf{E}_{\mathsf{s}} 
[F(\ve \X _{\cdot /\ve ^2 }^{i(x,\mathsf{s})} -
    \ve \X _{0 /\ve ^2 }^{i(x,\mathsf{s})} 
   ) ] - E [F ( \sqrt{\hat{a}(\sx )} B ) ] 
\right| \ge \kappa \} ) 
\\ \notag = & 
\limz{\epsilon} 
\hatmux (\{\mathsf{s}; 
\left|\mathsf{E}^{X\mathsf{Y}} _{(x,\theta_{-x} (\mathsf{s}-\delta_x)) } 
[ F(\ve \X _{\cdot /\ve ^2 }  -  \ve \X _{0/\ve ^2 })]  -
E [F ( \sqrt{\hat{a}(\sx )} B) ] 
\right| \ge \kappa \} ) 
\\ \notag = & 
\limz{\epsilon} 
\mugz (\{\mathsf{s}; 
\left|\mathsf{E}^{X\mathsf{Y}} _{(x ,\mathsf{s})} 
[ F(\ve \X _{\cdot /\ve ^2 }  -  \ve \X _{0/\ve ^2 })]  -
E [F ( \sqrt{\hat{a}(\sss )} B) ] 
\right| \ge \kappa \} ) 
\\
= & \notag \, 0 
.\end{align}
We  immediately deduce \eqref{:36b} from \eqref{:36f}. 
\end{proof}

\begin{thm}\label{l:37}
Assume \0. 
Then, for each $ i \in \mathbb{Z}$, 
\begin{align}\label{:37a}& \quad \quad \quad \quad 
\lim_{\epsilon \to 0} \epsilon X_{\cdot /\epsilon ^2}^i = \sqrt{\hat{a}(\si )} B 
\quad \text{  weakly in } C([0,\infty);\Rd ) 
\end{align}
under $\mathsf{P}_{\mathsf{s}} $ in $ \mu $-probability. That is, 
for any $ F \in C_b(  C([0,\infty);\Rd ) ) $ and $ \kappa > 0 $ 
\begin{align}\label{:37b}& \quad \quad 
\limz{\epsilon} 
\mu (\{\mathsf{s}; 
|\mathsf{E}_{\mathsf{s}} (F(\epsilon X_{\cdot /\epsilon ^2}^i )) -
F ( \sqrt{\hat{a}(\si )} B )
| \ge \kappa \} ) 
= 0 
.\end{align} 
\end{thm}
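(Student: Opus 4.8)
The plan is to reduce Theorem~\ref{l:37} (the scaling limit for each tagged particle $X^i$ of the original unlabeled dynamics under $\mu$) to Lemma~\ref{l:36} (the scaling limit for the tagged particle at a fixed site $x$, under $\hatmux$), by disintegrating $\mu$ over the position of the $i$-th labeled particle. Concretely, first I would introduce, for the fixed label $\lab$, the map $\mathsf{s}\mapsto \lab_i(\mathsf{s})=s_i$, and note that $\mu$ restricted to $\SSSsi$ (which has full measure by \eqref{:20p}) can be written as a mixture $\mu(\cdot) = \int_{\R} \hatmux(\cdot)\,\rho^1(x)\,dx$ once we condition appropriately — more precisely, using the one-Campbell measure $\muone$ and the reduced Palm measures $\mu_x$, together with the identification $i(\mathsf{s},x)=i$ on the event $\{s_i = x\}$. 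The key point is that, for $\mu$-a.e.\ $\mathsf{s}$, the index $i(\mathsf{s},\lab_i(\mathsf{s})) = i$ by the very definition \eqref{:34z} of $i(\mathsf{s},x)$, so that the tagged particle $X^{i(x,\mathsf{s})}$ appearing in Lemma~\ref{l:36} coincides $\mathsf{P}_{\mathsf{s}}$-a.s.\ with $X^i$ whenever $\mathsf{s}\in\SSS_x$.

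The second step is to rewrite the $\mu$-probability statement \eqref{:37b} as an integral over $x\in\R$ against $\rho^1(x)\,dx$ of the corresponding $\hatmux$-probability. Setting $g_\epsilon(\mathsf{s}) = |\mathsf{E}_{\mathsf{s}}[F(\epsilon X^i_{\cdot/\epsilon^2})] - F(\sqrt{\hat{a}(\si)}B)|$ — with the understanding that $F(\sqrt{\hat{a}(\si)}B)$ really means $E[F(\sqrt{\hat{a}(\si)}B)]$ — I would argue that $g_\epsilon$ is, up to the translation-invariance identities, a function whose $\mu$-distribution decomposes so that
\begin{align*}
\mu(\{\mathsf{s}; g_\epsilon(\mathsf{s})\ge \kappa\}) = \frac{1}{\rho^1}\int_{\R} \hatmux(\{\mathsf{s}; g_\epsilon(\mathsf{s})\ge\kappa\})\,\rho^1(x)\,dx,
\end{align*}
where on $\SSS_x$ we have $g_\epsilon(\mathsf{s}) = |\mathsf{E}_{\mathsf{s}}[F(\epsilon X^{i(x,\mathsf{s})}_{\cdot/\epsilon^2})] - E[F(\sqrt{\hat{a}(\sx)}B)]|$. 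Here I must be careful that $\si = \theta_{-\lab_i(\mathsf{s})}(\mathsf{s}-\delta_{\lab_i(\mathsf{s})})$ restricted to $\SSS_x$ becomes $\sx = \theta_{-x}(\mathsf{s}-\delta_x)$, which matches the argument of $\hat a$ in Lemma~\ref{l:36}. Then, by Lemma~\ref{l:36}, for each fixed $x$ the inner integrand tends to $0$ as $\epsilon\to0$, and since it is bounded (by $1$, after normalizing, since probabilities are bounded), dominated convergence — or rather a localized version, integrating $x$ over a bounded set and using translation invariance of $\mu$ to reduce to, say, the unit cell — gives the conclusion.

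The main obstacle I anticipate is the measurability and integrability bookkeeping needed to justify the disintegration $\mu = \int \hatmux\,\rho^1\,dx$ and the interchange of limit and integral in $x$. Because $\rho^1$ is constant (translation invariance) but infinite in total mass over $\R$, one cannot directly apply dominated convergence on all of $\R$; instead I would exploit translation invariance of the whole setup — the law of $X^i$ under $\mathsf{P}_{\mathsf{s}}$ and the law under $\mathsf{P}_{\theta_x\mathsf{s}}$ are related by the shift, and $\hat a(\si)$ is shift-covariant in the right way — to reduce the $x$-integral to a fixed bounded region (one period), where dominated convergence applies cleanly. A secondary technical point is handling the quasi-everywhere versus $\mu$-a.e.\ distinction: Lemma~\ref{l:34} and the diffusion measures $\PXY_{(x,\mathsf{s})}$ are only defined up to q.e.\ starting points, but \eqref{:20d} ensures that $\mu$ (and hence $\mu^{[1]}$, $\hatmux$) does not charge sets of zero capacity, so the identities propagate to $\mu$-a.e.\ statements. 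Once these reductions are in place, the remaining steps are the routine chain of equalities already exemplified in the proof of Lemma~\ref{l:36} (display \eqref{:36f}).
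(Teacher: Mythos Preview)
Your overall strategy—disintegrate $\mu$ over the position $x=\lab_i(\mathsf{s})$ of the $i$-th labeled particle, identify $X^i$ with $X^{i(\mathsf{s},x)}$ on $\{\lab_i(\mathsf{s})=x\}$, and then invoke Lemma~\ref{l:36} for each fixed $x$—is exactly the route the paper takes. The identification $i(\mathsf{s},\lab_i(\mathsf{s}))=i$ and the matching $\si=\sx$ on $\SSS_x$ are also correct and used in the same way.

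The gap is in your disintegration formula and the way you propose to pass to the limit in $x$. The identity $\mu(\cdot)=\int_{\R}\hatmux(\cdot)\,\rho^1(x)\,dx$ is false as written (the right-hand side has infinite total mass), and your proposed remedy—``reduce to the unit cell by translation invariance''—does not apply: the system is translation invariant but not periodic, and the law of $\lab_i(\mathsf{s})$ under $\mu$ is \emph{not} translation invariant (it is a genuine probability measure on $\R$, concentrated near wherever the $i$-th labeled particle typically sits). The correct disintegration, which the paper uses, is simply
\[
\mu\bigl(\{\mathsf{s};|G_\ve(\mathsf{s})|\ge\kappa\}\bigr)
=\int_{\R}\hatmux\bigl(\{\mathsf{s};|G_\ve(\mathsf{s})|\ge\kappa\}\,\big|\,\lab_i(\mathsf{s})=x\bigr)\,\mu\circ\lab_i^{-1}(dx),
\]
i.e.\ against the \emph{probability} measure $\mu\circ\lab_i^{-1}$ rather than $\rho^1\,dx$. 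Since $\mu\circ\lab_i^{-1}$ is tight, one chooses $R$ so that $\mu\circ\lab_i^{-1}(S_R^c)\le\upsilon$; on $S_R$ the integrand is bounded (after the harmless estimate $\hatmux(\,\cdot\mid\lab_i=x)\le \hatmux(\,\cdot\,)/\hatmux(\lab_i=x)$, with $\int_{S_R}\hatmux(\lab_i=x)^{-1}\,\mu\circ\lab_i^{-1}(dx)=\int_{S_R}\rho^1\,dx<\infty$), and bounded convergence together with Lemma~\ref{l:36} gives the limit $0$ on $S_R$. Letting $\upsilon\downarrow0$ finishes the proof. Replacing your Campbell-measure/periodicity argument with this tightness-plus-truncation step closes the gap.
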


\begin{proof}
Let $ i (\mathsf{s},x)$ be as in \eqref{:34z}. 
Set $ \hatmux  = \mu (\cdot | \mathsf{s}(\{ x \} )=1 )$ as in \lref{l:36}. 
Recall that $ \mu ( \mathsf{S}_{\mathrm{s.i}} ) = 1 $ by \eqref{:20p}.  
Let $ \lab (\mathsf{s}) = (\labi (\mathsf{s}) )_{i\in\Z }$ be the label as before. 

Without loss of generality, we can and do assume $ i=0 $ in \eqref{:37a}. 
By a straightforward calculation, we have 
\begin{align}\label{:37c}
\mu (X^0 \in \cdot ) = &
\int_{\Rd } \mu (X^0 \in \cdot \, | \labz (\mathsf{s}) = x ) \, \mu \circ \labz ^{-1}(dx)
\\\notag 
= &
\int_{\Rd } \mu (X^0 \in \cdot \, | \labz (\mathsf{s}) = x ,\, 
\mathsf{s} (\{ x \} ) = 1 ) 
\, \mu \circ \labz ^{-1}(dx) 
\\\notag 
= &
\int_{\Rd } \hatmux ( \{ X^{i(\mathsf{s},x)} \in \cdot \} \cap\{ 
i(\mathsf{s},x) = 0 
 \} \, 
| \labz (\mathsf{s}) = x ) \, \mu \circ \labz ^{-1}(dx)
\\\notag 
 = &
 \int_{\Rd } \hatmux (X^{i(\mathsf{s},x)} \in \cdot \, | \labz (\mathsf{s}) = x ) \, \mu \circ \labz ^{-1}(dx)
.\end{align}

Let $ F \in C_b (\CRd ) $, and set 
\begin{align}\label{:37e}& 
G_{\ve }^1(\mathsf{s}) = 
\mathsf{E}_{\mathsf{s}} [F ( \ve X_{\cdot/ \ve^2}^0)] - 
E [F( \sqrt{\hat{a}(\si )} B ) ]
,\\ \notag &
G_{\ve }^2(\mathsf{s}) = 
\mathsf{E}_{\mathsf{s}}  [F ( \ve X_{\cdot/ \ve^2}^{i(\mathsf{s},x)})] - 
E [F( \sqrt{\hat{a}(\sx )} B ) ]
.\end{align}
For each $ \kappa > 0 $, we see from \eqref{:37c} that 
\begin{align}\label{:37n}&
\mu (\{\mathsf{s};  |  G_{\ve }^1(\mathsf{s}) | \ge \kappa \}) 
= 
\int_{\Rd } \hatmux ( \{\mathsf{s}; 
|G_{\ve }^2(\mathsf{s}) | \ge \kappa \} 
  \, 
| \labz (\mathsf{s}) = x ) \, \mu \circ \labz ^{-1}(dx)
.\end{align}
Let $ S_R =\{ x; |x| \le R  \} $. 
For any $ \upsilon > 0 $, take $ R = R (v)$ such that
\begin{align}& \notag 
\mu \circ \labz ^{-1} (S_R^c) \le \upsilon
. \end{align}
Then 
\begin{align}\label{:37f}&
\int_{S_R^c} 
\hatmux  (\{\mathsf{s};  |  G_{\ve }^2(\mathsf{s}) | \ge \kappa \} 
| \labz (\mathsf{s}) = x ) \, \mu \circ \labz ^{-1}(dx)  \le \upsilon 
.\end{align}
It is not difficult to see that, for each $ i , R \in \N $, 
\begin{align} &  \notag 
\quad \quad 
\int_{ S_R}\frac{1}{\hatmux (\lab _i  (\mathsf{s}) = x )} 
\mu \circ \lab _i ^{-1}(dx) 
= \int_{ S_R} \rho ^1 dx 
 < \infty 
.\end{align}
Using this and \lref{l:36}, we apply the bounded convergence theorem 
to obtain 
\begin{align}\label{:37g} & 
\limsupz{\ve } 
\int_{ S_R } \hatmux ( \{\mathsf{s}; 
|G_{\ve }^2(\mathsf{s}) | \ge \kappa \} 
  \, 
| \labz (\mathsf{s}) = x ) \, \mu \circ \labz ^{-1}(dx)
\\ \notag 
\le &
\limsupz{\ve } 
\int_{ S_R} 
\hatmux  (\{\mathsf{s}; 
|G_{\ve }^2(\mathsf{s}) | \ge \kappa \} ) 
\frac{1}{\hatmux (\labz (\mathsf{s}) = x )}
\, \mu \circ \labz ^{-1}(dx)  
\\ \notag = & 
\int_{ S_R} \limsupz{\ve }
\hatmux  (\{\mathsf{s}; 
|G_{\ve }^2(\mathsf{s}) | \ge \kappa \} ) 
\frac{1}{\hatmux (\labz (\mathsf{s}) = x )} 
\, 
\mu \circ \labz ^{-1}(dx)  
\\ \notag 
= & \, 0
.\end{align}

Putting \eqref{:37e}--\eqref{:37g} together we have 
\begin{align}\label{:37h}& 
\limsupz{\ve }\mu (\{\mathsf{s};  |  G_{\ve }^1(\mathsf{s}) | \ge \kappa \}) 
\le \upsilon 
.\end{align}
Because $ \upsilon $ is arbitrary, we see that the left-hand side of \eqref{:37h} 
equals zero. 
Together with \eqref{:37e}, this yields \eqref{:37b}. 
We have thus completed the proof. 
\end{proof}

\begin{rem}\label{r:41} \thetag{1} 
When using Kipnis--Varadhan theory, one has to assume the existence of 
the mean forward velocity $ \varphi \in L^2(\mu _0)$ of tagged particles and that 
\begin{align*}&
|\int_{\mathsf{S}} \varphi f d\mu _0 | \le 
C \EY (f,f) ^{1/2} 
\quad \text{ for all } f \in \dY 
\end{align*}
for some constant $ C $. 
Roughly speaking, the existence of  the mean forward velocity is 
captured by the condition 
\begin{align*}&
\limz{t} \frac{1}{t}E_{\mathsf{s}} [X_t^0-X_0^0] 
=: \varphi (\mathsf{s}) 
\quad \text{ in } \Lmuz 
,\end{align*}
the average of which must vanish with respect to 
the reduced Palm measure $ \muz $. 
It is often difficult to check these conditions for interacting Brownian motions with singular or long range potentials. 
Indeed, one essentially uses the fact that the dynamics are given by a strong solution of an ISDE \eqref{:10a}.  

In \cite{o.inv2}, we developed a Dirichlet form version of Kipnis--Varadhan theory 
that allows us to verify these conditions and generalize the results themselves. 
In fact, only the existence of the coupled Dirichlet form 
$ (\EXY , \dXY  )$ on $ L^2( \3  )$ was necessary 
as a substitute for the mean forward velocity condition above.  
This follows from \0 as we see in \lref{l:32}. 
\\
\thetag{2} In \cite{o.inv2}, Dirichlet forms are assumed to satisfy the strong sector condition, which is a generalization of reversibility. 
It is easy to see that \tref{l:37} holds under the strong sector condition. 
\end{rem}

\section{Representation of the self-diffusion constant $ \selfmu $}  
\label{s:4}

Before quoting a representation theorem for the self-diffusion matrix $ \selfmu $ from \cite{o.inv2}, 
we introduce the quotient Dirichlet form associated with $ (\EY , \dY ) $. 
Although the ideas in this section are valid for $ d \ge 1$, 
we restrict our discussion to $ d=1$ for simplicity. 
We refer the reader to $ \cite{o.inv2}$ for the general case.

Recall the decomposition $ \DY  =  \Dsft  +  \, \DDD $ in \eqref{:31y}, 
and choose bilinear forms on $ \dYz $ such that 
\begin{align}
&\notag 
\EYone (f,g) = \int_{\SSS } \Dsft [f,g] d\mugz ,\quad 
\EYtwo (f,g) = \int_{\SSS }  \DDD [f,g] d\mugz 
.\end{align}
We can naturally extend 
the domain $ \dYz $ of the operator $ \map{\Dsft }{\dYz }{\Lmuz }$ to $ \dY $, 
where these bilinear forms are represented by the Dirichlet form $ \EY $. 
\begin{align}\label{:40b}&
\EY = \EYone + \EYtwo 
.\end{align}
Let $ \dYttwo = \dY / \EYtwo $ be the quotient space of $ \dY $
with the equivalence relation 
$ \sim_{\EYtwo }$ given by $ \EYtwo $, 
that is, $ f \sim_{\EYtwo } g $ if and only if $ \EYtwo (f-g,f-g) = 0 $. 
Let $ \dtYz $ be a vector space 
\begin{align}
&\notag 
\dtYz = \{ \mathsf{f}=(\Dsft f , f/\sim _{\EYtwo } ) 
\in \Lmuz \ts \dYttwo \, ;\, 
f \in  \dY  \} 
\end{align}
with inner product
\begin{align*}&
\EYt (\ff , \gg ) = (\Dsft f ,\Dsft g )_{\Lmuz } + \EYtwo ( f , g )
.\end{align*}
Let $ \dtY $ be the completion of $ \dtYz $ with inner product $ \EYt $ as above. 
Note that $ \EYt (\ff , \gg ) = \EY ( f , g )$. Hence 
$ (\dtY ,\EYt )$ gives a representation of the quotient Hilbert space of $ \dY $ with inner product $ \EY $. 

Let $ (\EYtone ,\dtY )$ and $ (\EYttwo ,\dtY )$ be the quotient bilinear forms of $ (\EYone ,\dY )$ and $ (\EYtwo ,\dY )$ 
defined in the same manner as $ (\EYt , \dtY )$. 
By definition, the domain of these bilinear forms is $ \dtY $, and 
\eqref{:40b} yields the representation 
\begin{align}
&\notag 
\EYt = \EYtone + \EYttwo 
.\end{align}
For $ \ff = (\ff _1,\ff _2) $ and $\gg=(\gg _1,\gg _2)\in \dtY $, we have 
$ \EYt (\ff , \gg ) = \EYtone (\ff _1 , \gg _1 ) + \EYttwo (\ff _2, \gg _2) $. 

For $ (f_1,f_2), (g_1,g_2) \in \Lmuz \ts \dYttwo $ we extend the domain of $ \EYt $ so that 
\begin{align}& \notag 
\EYt ((f_1,f_2) , (g_1,g_2)) = 
(f_1 ,g_1 )_{\Lmuz } + \EYttwo ( f_2 , g_2 )
.\end{align}
We quote the following lemma from \cite{o.inv2}: 
\begin{lem} \label{l:41} 
There exists a unique solution $ \chi \in \dtY $ of the equation 
\begin{align}\label{:41e}&
\EYt (\chi , \gg ) = \EYt ((\ee ,0), \gg ) 
\quad \text{ for all } \gg \in \dtY 
.\end{align}
\end{lem}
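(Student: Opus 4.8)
The plan is to read \eqref{:41e} as the equation determining the Riesz representative, in the Hilbert space $ (\dtY ,\EYt ) $, of the linear functional $ \gg \mapsto \EYt ((\ee ,0),\gg ) $; the lemma then reduces to verifying that this functional is bounded.

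First I would record that $ (\dtY ,\EYt ) $ really is a Hilbert space: by construction $ \dtY $ is the completion of $ \dtYz $ with respect to $ \EYt $, and $ \EYt $ is positive definite on $ \dtYz $, since $ \EYt (\ff ,\ff ) = \| \Dsft f \|_{\Lmuz }^2 + \EYttwo (f,f) $ vanishes only when $ \Dsft f = 0 $ in $ \Lmuz $ and $ \EYttwo (f,f)=0 $, i.e.\ only for $ \ff = 0 $ in $ \Lmuz \ts \dYttwo $. Next I would make the right-hand side of \eqref{:41e} meaningful for every test element: note that $ (\ee ,0)=(1,0) $ need not lie in $ \dtY $ itself, but it does lie in the larger space $ \Lmuz \ts \dYttwo $ on which $ \EYt $ was extended just before the statement via $ \EYt ((f_1,f_2),(g_1,g_2)) = (f_1,g_1)_{\Lmuz } + \EYttwo (f_2,g_2) $. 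From this formula, for $ \gg = (\gg _1,\gg _2)\in \dtY $ one has $ \EYt ((\ee ,0),\gg ) = (1,\gg _1)_{\Lmuz } $, and, since $ \EYttwo \ge 0 $ forces $ \EYt (\gg ,\gg ) \ge \| \gg _1 \|_{\Lmuz }^2 $, the first-component map $ \gg \mapsto \gg _1 $ is $ 1 $-Lipschitz on $ \dtYz $ and hence extends to a bounded linear map $ \dtY \to \Lmuz $ (completeness of $ \Lmuz $). Thus $ \ell (\gg ) := \EYt ((\ee ,0),\gg ) $ is a well-defined linear functional on $ \dtY $.

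Then I would check boundedness. Because $ \muz $ is a probability measure, the constant function $ \ee = 1 $ lies in $ \Lmuz $ with $ \| 1 \|_{\Lmuz } = 1 $, so by Cauchy--Schwarz and the inequality above,
\begin{align*}&
|\ell (\gg )| = |(1,\gg _1)_{\Lmuz }| \le \| \gg _1 \|_{\Lmuz } \le \EYt (\gg ,\gg )^{1/2}
\qquad \text{ for all } \gg \in \dtY
.\end{align*}
Hence $ \ell $ is a bounded linear functional on the Hilbert space $ (\dtY ,\EYt ) $, and the Riesz representation theorem provides a unique $ \chi \in \dtY $ with $ \EYt (\chi ,\gg ) = \ell (\gg ) = \EYt ((\ee ,0),\gg ) $ for all $ \gg \in \dtY $, which is exactly \eqref{:41e}.

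I do not expect a real obstacle: the lemma is essentially a packaging of the Lax--Milgram / Riesz representation theorem for the symmetric positive-definite form $ \EYt $. The two points that genuinely need the one-line justifications above are that the first-component projection extends continuously to the abstract completion $ \dtY $ (so the right-hand side of \eqref{:41e} is defined against every $ \gg \in \dtY $, not merely those coming from $ \dY $), and that $ \muz (\SSS )=1 $, which is what makes $ (\ee ,0) $ determine a bounded functional on $ \dtY $.
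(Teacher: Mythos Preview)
Your proposal is correct and follows exactly the paper's approach: the paper's proof is a single sentence observing that $F(\gg)=\EYt((\ee,0),\gg)$ is a bounded linear functional on the Hilbert space $(\dtY,\EYt)$ and then invoking the Riesz theorem. You have simply supplied the details the paper leaves implicit---that the first-component projection extends continuously to the completion and that $\muz$ being a probability measure gives $\|1\|_{\Lmuz}=1$---so there is nothing to add.
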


\begin{proof}
Because $ F (\gg ) = \EYt ((\ee ,0), \gg ) $ 
can be regarded as a bounded linear functional 
of the Hilbert space $ \dtY $ with inner product $ \EYt $, 
\lref{l:41} is obvious from the Riesz theorem. 
\end{proof}

\begin{lem} \label{l:42} 
Let $ \chi =(\chi _1,\chi _2) \in \dtY $ 
be the unique solution of equation \eqref{:41e}. 
Then the self-diffusion constant $ \selfmu $ is given by 
\begin{align}\label{:42b}&
 \half  \selfmu  = 
\EYtone  ( \ee  - \chi _1  ,   \ee  - \chi _1  )  
+ \EYttwo  ( \chi _2 , \chi _2 )  
.\end{align}
In particular, if $ (\ee  , 0) \in \dtY $, then 
$ \selfmu  = 0 $. 
\end{lem}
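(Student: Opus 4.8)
\emph{Proof strategy.} The plan is to read \eqref{:42b} off the abstract invariance-principle and representation theorem of \cite{o.inv2} --- the same result behind \lref{l:35} --- once we recognize that the quotient Hilbert space used there is exactly $(\dtY ,\EYt )$ and that its ``corrector equation'' is exactly \eqref{:41e}. That theorem attaches to a reversible ``tagged particle $+$ environment'' diffusion a self-diffusion matrix which, in one dimension, takes the form $\half \selfmu = \EYt ((\ee ,0)-\chi ,(\ee ,0)-\chi )$ with $\chi $ the unique solution of \eqref{:41e}; so the whole content of the lemma is to install this identification and then unwind the right-hand side.

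First I would pin down why the inhomogeneous term of \eqref{:41e} is $(\ee ,0)$. The tagged particle $X=X^{0}$ enters the coupled diffusion $(X,\Y )$ only through the combined field $\nabla _x-\Dsft $ --- displacing $X^{0}$ to the right translates the environment $\Y $ to the left --- while the coordinate $u(x,\sss )=x$ has (local) effective gradient $(\nabla _x-\Dsft )u=1$ and $\DDD [u,u]=0$, so that $\DXY [u,u]=\half $. Thus the martingale part of $X$ has unit speed and is carried entirely by the $\Dsft $-component of the square field $\DY =\Dsft +\DDD $ in \eqref{:31y}; accordingly, in the splitting $\EYt =\EYtone +\EYttwo $ the element to be corrected is $(\ee ,0)\in \Lmuz \ts \dYttwo $. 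Granting this, \cite{o.inv2} gives
\begin{align*}
\half \selfmu &= \EYt ((\ee ,0)-\chi ,(\ee ,0)-\chi )\\
&= \EYt ((\ee ,0),(\ee ,0)) - \EYt (\chi ,\chi ) = 1 - \EYt (\chi ,\chi )
,\end{align*}
the middle equality following from \eqref{:41e}, which says exactly that $\chi $ is the $\EYt $-orthogonal projection of $(\ee ,0)$ onto $\dtY $, and the last using $\EYt ((\ee ,0),(\ee ,0))=\|\ee \|_{\Lmuz }^{2}=\muz (\SSS )=1$.

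Next I would unwind the right-hand side. Since $(\ee ,0)-\chi =(\ee -\chi _1,-\chi _2)$ and $\EYt $ acts as $\|\cdot \|_{\Lmuz }^{2}$ on first components and as $\EYttwo $ on second components, with $\EYttwo (-\chi _2,-\chi _2)=\EYttwo (\chi _2,\chi _2)$, we obtain $\half \selfmu =\EYtone (\ee -\chi _1,\ee -\chi _1)+\EYttwo (\chi _2,\chi _2)$, which is \eqref{:42b}. For the final assertion: if $(\ee ,0)\in \dtY $ then $(\ee ,0)$ itself solves \eqref{:41e}, so the uniqueness in \lref{l:41} forces $\chi =(\ee ,0)$, i.e.\ $\chi _1=\ee $ and $\chi _2=0$; substituting into \eqref{:42b} yields $\selfmu =0$.

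The main obstacle is not any single computation but the faithful matching of this concrete quotient construction to the abstract machinery of \cite{o.inv2}: fixing the normalization (the factor $\half $, the unit martingale speed of $X$, the probability normalization $\muz (\SSS )=1$), checking that the standing hypotheses \0 supply what the abstract theorem requires --- above all the closability of the coupled form, i.e.\ \lref{l:32}, and the reversibility (strong sector) property recorded in \rref{r:41} --- and verifying that the orthogonality relation \eqref{:41e} genuinely realizes the abstract projection occurring there. Once these identifications are secured, \eqref{:42b} and the vanishing statement follow as above.
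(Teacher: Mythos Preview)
Your proposal is correct and follows essentially the same approach as the paper: both obtain \eqref{:42b} by invoking the representation theorem of \cite{o.inv2} (after identifying the present quotient space $(\dtY ,\EYt )$ and equation \eqref{:41e} with the abstract objects there), and both deduce the vanishing statement by observing that $(\ee ,0)\in \dtY $ forces $\chi =(\ee ,0)$ via the uniqueness in \lref{l:41}, whence both summands in \eqref{:42b} are zero. Your write-up is considerably more expansive --- spelling out why the inhomogeneous term is $(\ee ,0)$ and recording the intermediate identity $\half \selfmu = 1-\EYt (\chi ,\chi )$ --- but the underlying argument is the paper's.
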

\begin{proof}
Applying \cite[Theorems 1,2]{o.inv2} to $ \X $, we deduce that 
$ \X $ has the scaling limit in \eqref{:35a} with the self-diffusion constant 
$ \selfmu $ given by \eqref{:42b}. This completes the proof of the first claim. 
If $ (\ee  , 0) \in \dtY $, then $ \chi = (\ee  , 0) $. 
Hence the second claim follows from the first. 
\end{proof}

In the rest of this section, 
we explain the back-ground of the representation formula \eqref{:42b}. 
To prove the convergence of $ \ve X_{\cdot / \ve ^2} $, 
we use the technique of corrector \cite{KV}, that is, 
we use a function $ \chi _{\ve }$, called corrector, for which 
\begin{align}& \notag 
\ve X_{t/\ve ^2} - \chi _{\ve }( \theta _{\ve X_{t/\ve ^2}}(\mathsf{s}))
= \text{ a continuous local martingale $ + o (\ve )$}
,\end{align}
and for which 
\begin{align}& \notag 
\limz{\ve } 
\sup_{|x| < R }
E[ |\chi _{\ve }(\theta _{\ve X_{t/\ve ^2}}(\mathsf{s}))|^2] = 0 
\quad \text{ for any } R\in\N 
,\\& \notag 
\limz{\ve } \nabla _x \chi _{\ve }(\theta_x(\mathsf{s}))|_{x=0} 
=\limz{\ve } \Dsft  \chi _{\ve }(\mathsf{s}) 
= \psi (\mathsf{s}) 
\quad \text{ in } \Lmuz 
.\end{align}
Then we have 
\begin{align}& \notag 
\limz{\ve } \ve X_{t/\ve ^2} =
\limz{\ve } \{ \ve X_{t/\ve ^2} - 
\chi _{\ve }( \theta _{\ve X_{t/\ve ^2}}(\mathsf{s}))  \} = 
M_t 
.\end{align}
Very roughly, using Fukushima decomposition 
(the Dirichlet form version of the It$ \hat{\mathrm{o}}$-Tanaka formula), 
we see that the quadratic variation of the limit martingale $ M $ is given by 
\begin{align}\label{:43g}&
\langle M \rangle_t =  2 \EY (x-\chi , x-\chi ) t 
,\end{align}
where $ \chi$ is a formal limit $ \chi= \limz{\ve } \chi _{\ve }( \mathsf{s})$. 
In practice, $ \chi _{\ve} $ diverge as $ \ve \to 0$ and 
$ x $ is not in the domain of the Dirichlet space $ (\EY , \dY )$; nevertheless, we can still justify \eqref{:43g} from formula \eqref{:42b} 
by introducing the quotient Dirichlet form $ (\EYt , \dtY  )$ 
and regarding $ \psi $ as an element of $ \dtY $. 
In fact, $ \EY (x-\chi , x-\chi )$ can be replaced by 
$ \EYtone  ( \ee  - \chi _1  ,   \ee  - \chi _1  )  
+ \EYttwo  ( \chi _2 , \chi _2)  $, where $ \chi $ corresponds to 
$ (\chi _1 , \chi _2)$. 

To apply Fukushima decomposition to 
the function $ x - \chi_{\ve }(\theta_x (\mathsf{s}))$, we use the coupled Dirichlet form $ (\EXY ,\dXY )$ because $ x - \chi_{\ve }(\theta_x (\mathsf{s}))$ belongs to $ \dXY $ locally. 
We remark that $ x - \chi_{\ve }(\theta_x (\mathsf{s}))$ cannot be in the domain of $ (\EY ,\dY )$ even locally. 

\section{Vanishing self-diffusion constant in one dimension}  \label{s:5}

In this section, we prove that $ \selfmu = 0 $ for $ d=1$ and $ \mu $ 
satisfying \eqref{:20e}. 
\begin{thm} \label{l:50} 
Suppose that $ d=1$, and assume \0. 
Then the self-diffusion constant $ \selfmu $ vanishes. 
\end{thm}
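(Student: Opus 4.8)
The plan is to prove \tref{l:50} by verifying the sufficient condition in \lref{l:42}: it suffices to show that the pair $(\ee , 0)$ belongs to the quotient space $\dtY$, since then $\chi = (\ee , 0)$ and the representation formula \eqref{:42b} forces $\selfmu = 0$. So the whole problem reduces to constructing, for $d=1$ and under the non-collision assumption \eqref{:20e}, a sequence $f_n \in \dY$ such that $\Dsft f_n \to \ee$ in $\Lmuz$ while $\EYtwo (f_n, f_n) \to 0$; equivalently, $(\Dsft f_n, f_n/{\sim}_{\EYtwo}) \to (\ee, 0)$ in $\Lmuz \ts \dYttwo$.

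The key geometric input is that in one dimension, under the non-collision condition, the labeled configuration $\mathbf{x} = (x_i)_{i\in\Z}$ satisfies $x_i < x_{i+1}$ for all $i$, so the order statistics are well defined and the number of particles in a bounded interval determines displacement in a controlled way. I would exploit this to write the shift direction $\ee$ — the infinitesimal generator of the translation $\theta_x$ — approximately as a $\DDD$-gradient of a genuine local function. Concretely, consider functions of the form $f_n(\mathsf{s}) = \sum_{i} g_n(s_i)$ (suitably interpreted via cutoffs so that $f_n$ is local and smooth), where $g_n$ is chosen so that $\check{f_n}$ behaves like $\sum_i s_i$ on the region that matters; then $\Dsft f_n(\mathsf{s}) = \sum_i g_n'(s_i)$ approximates $\ee$ once $g_n' \approx 1$ on a large ball, while $\DDD [f_n, f_n] = \half \sum_i |g_n'(s_i)|^2$ — which does NOT vanish. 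The fix, which is the real content, is that $\ee$ should be realized not as a $\Dsft$-image of a function with small $\DDD$-energy directly, but by using the ordering: because particles cannot cross, the "center of mass seen from the tagged particle" type functionals, or equivalently functionals built from the counting function $N_r(\mathsf{s}) = \mathsf{s}((-\infty, r])$ type quantities localized near the origin, have $\Dsft$-derivative close to $1$ but $\DDD$-derivative that can be made arbitrarily small by spreading the cutoff over a long interval — this is exactly the one-dimensional mechanism (analogous to Harris's hard rods and Dyson's model) by which the tagged particle inherits the rigidity of the labelled ordering and the self-diffusion degenerates.

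More precisely, I would take $f_n$ to be (a smoothed, localized version of) $\frac{1}{n}\sum_{k=1}^{n} \mathbf{1}(\text{the } k\text{-th particle to the right of the origin is} \le \cdot)$-type sums, or dually $\frac{1}{n}\sum_{j} h(s_j/n)$ for a fixed profile $h$ with $h' \equiv 1$ near $0$; the point is that $\Dsft f_n = \frac{1}{n}\sum_j h'(s_j/n) \to \rho^1 \cdot \int h' $ can be normalized to equal $\ee$ in the limit in $\Lmuz$ (using translation invariance and the local boundedness of correlation functions \eqref{:20d} to control $L^2$ norms), while $\DDD[f_n,f_n] = \frac{1}{2n^2}\sum_j |h'(s_j/n)|^2 = O(1/n) \to 0$ in $L^1(\muz)$, hence $\EYtwo(f_n,f_n)\to 0$. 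One must also check $f_n \in \dY$, i.e.\ $f_n \in \Lmuz$ and $\EY(f_n,f_n) < \infty$, which follows from the same moment bounds together with the fact that $\Dsft f_n$ is bounded; and one must check that $f_n$ (after a spatial cutoff $\pi_r$ with $r = r(n) \to \infty$ slowly) stays in $\dz$ and that the cutoff error is negligible — here the non-explosion of tagged particles and the translation invariance are what guarantee the cutoff does not destroy the convergence $\Dsft f_n \to \ee$.

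The main obstacle I anticipate is making the $\Dsft f_n \to \ee$ convergence rigorous in $\Lmuz$ simultaneously with keeping $f_n$ genuinely in the domain $\dY$ (a local, smooth, finite-energy function) rather than a formal infinite sum: the natural candidate $\sum_j s_j$ is not in $\Lmuz$ and not local, so one is forced to introduce two scales — a "slope-one" region of size $n$ that produces the derivative $\ee$, and a cutoff at a larger scale $r(n)$ that makes the function local — and to show that the contribution of the transition zone between these scales has $\DDD$-energy going to zero. This is where the one-dimensional ordering $x_i < x_{i+1}$ and the non-collision condition \eqref{:20e} must be used essentially (in $d\ge 2$ no such $f_n$ exists, consistent with $\selfmu$ being nondegenerate there): the ordering lets one build the slope-one function as a sum over particles with controlled overlap of gradients, so that the cross terms in $\DDD[f_n,f_n]$ — which is a sum of squares $\half\sum_j|\partial_{s_j}\check{f_n}|^2$ with one term per particle, each of size $O(1/n^2)$ over $O(n)$ relevant particles — genuinely sum to $O(1/n)$. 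I would organize the proof as: (i) reduce to showing $(\ee,0)\in\dtY$ via \lref{l:42}; (ii) define the approximating sequence $f_n$ with its two scales; (iii) estimate $\|\Dsft f_n - \ee\|_{\Lmuz}\to 0$ using \eqref{:10e} and \eqref{:20d}; (iv) estimate $\EYttwo(f_n/{\sim},f_n/{\sim}) \le \EYtwo(f_n,f_n) = \int_{\SSS}\DDD[f_n,f_n]\,d\muz \to 0$; (v) conclude $(\Dsft f_n, f_n/{\sim}) \to (\ee,0)$ in $\Lmuz \ts \dYttwo$, hence $(\ee,0)\in\dtY$, hence $\selfmu = 0$.
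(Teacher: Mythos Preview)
Your overall strategy matches the paper's: reduce via \lref{l:42} to $(\ee,0)\in\dtY$, then exhibit $f_n\in\dY$ with $\Dsft f_n\to\ee$ in $\Lmuz$ and $\EYtwo(f_n,f_n)\to 0$. You even name the right object (``center of mass seen from the tagged particle''), but you do not commit to it; the concrete sequence you actually analyze, $f_n=\frac{1}{n}\sum_j h(s_j/n)$ with a fixed smooth compactly supported profile $h$, cannot work. These $f_n$ are smooth local symmetric functionals, hence lie in $\dYz$ \emph{regardless} of the non-collision condition, and your estimates \textup{(iii)}--\textup{(iv)} invoke only \eqref{:10e} and \eqref{:20d}. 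If they were valid they would apply verbatim to the Poisson point process (independent one-dimensional Brownian motions), where \eqref{:20e} fails and $\selfmu=1$. Concretely, step \textup{(iii)} already breaks: for compactly supported $h$ one has $\int h'=0$, so the $\Lmuz$-limit of $\Dsft f_n$ is $0$, not a nonzero constant you could normalize by; dropping compact support of $h$ makes $f_n$ non-local or not in $\Lmuz$, and your two-scale cutoff does not escape this cancellation. You rightly sense that \eqref{:20e} must be used essentially, but in your scheme it never enters.

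The paper commits to the order-statistic functional you mentioned only in passing: $\varphi_N(\mathsf{s})=\frac{1}{N}(s_1+\cdots+s_N)$, where $s_{-1}<0<s_1<s_2<\cdots$ label the environment particles. Then $\Dsft\varphi_N=\ee$ \emph{identically} (no law of large numbers needed) and $\DDD[\varphi_N,\varphi_N]=\tfrac{1}{2N}$, so convergence of the representatives to $(\ee,0)$ in $\dtY$ is immediate (\lref{l:52}, \lref{l:53}). The price is that $\varphi_N\notin\dz$: it is singular on $\mathcal{N}_1\cup\mathcal{N}_2$ with $\mathcal{N}_2=\{\mathsf{s}:\mathsf{s}(\{0\})\ge 1\}$. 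This is exactly where \eqref{:20e} is spent: it gives $\mathrm{Cap}_{\Y}(\mathcal{N}_1\cup\mathcal{N}_2)=0$ (\lref{l:51}), and that is what places $\varphi_N$ in $\dY$ despite the singularities. The missing idea, then, is to use the one-dimensional order statistics directly and absorb their non-smoothness via the capacity-zero argument, rather than to seek smooth symmetric surrogates, which cannot carry the non-collision information.
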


\begin{lem} \label{l:51} 
Let $ \mathcal{N}_2 = \{ \mathsf{s}\, ;\, \mathsf{s}(\{0\}) \ge 1\}$, 
and let $ \mathrm{Cap}_{\Y }$ be the capacity of the Dirichlet space 
$  (\EY , \dY ) $ on $ \Lmuz $. 
Then we obtain 
\begin{align}\label{:51a}&
\mathrm{Cap}_{\Y } (\mathcal{N}_1\cup \mathcal{N}_2 ) = 0 
.\end{align} 
\end{lem}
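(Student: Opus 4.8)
The guiding idea is that the environment process $ \Y $ is a change of coordinates of (a marginal of) the original unlabeled diffusion $ \mathsf{X}$, under which the collision set $ \mathcal{N}_1 $ of $ \mathsf{X}$ pulls back to precisely $ \mathcal{N}_1 \cup \mathcal{N}_2 $; hence \eqref{:51a} should follow from the non-collision assumption \eqref{:20e}. Heuristically, for an environment configuration a particle at the origin ($ \mathcal{N}_2 $) records a collision of a labeled particle with the tagged particle $ X^{0}$, whereas a multiple point of the environment ($ \mathcal{N}_1 $) records a collision among the particles $ (X^{i})_{i\not=0}$; together these account for all collisions of $ \mathsf{X}=\sum_i\delta_{X^i}$, which \eqref{:20e} (equivalently \eqref{:20f}) forbids. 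The plan is to turn this heuristic into the capacity statement \eqref{:51a} by passing through the coupled diffusion $ (X,\Y )$ of \sref{s:3}.

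First I would introduce $ \map{\iota}{\R \ts \SSS }{\SSS }$, $ \iota (x,\mathsf{s}) = \delta_x + \theta_x(\mathsf{s})$, so that, with $ X=X^{0}$ and $ \Y = \sum_{i\not=0}\delta_{X^i-X^0}$ as in \sref{s:3}, one has $ \mathsf{X}_t = \iota (X_t, \Y _t) $. Since $ \theta_x $ is a homeomorphism, $ \delta_x + \theta_x(\mathsf{s})$ has a multiple point exactly when $ \mathsf{s}$ has one or when $ \mathsf{s}(\{ 0 \})\ge 1 $ (in the latter case the atom $ \delta_x $ coincides with an atom of $ \theta_x(\mathsf{s})$), that is,
\begin{align*}&
\iota ^{-1}(\mathcal{N}_1) = \R \ts (\mathcal{N}_1 \cup \mathcal{N}_2 )
.\end{align*}
The measure part is routine: $ \mu (\mathcal{N}_1)=0 $ since sets of zero $ \mathrm{Cap}$ are $ \mu $-null, and a short Palm computation --- deleting and shifting one atom of a $ \mu $-typical (hence simple) configuration creates no new multiple point and cannot place a second atom at the conditioning point --- gives $ \muz (\mathcal{N}_1 \cup \mathcal{N}_2 )=0 $.

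Then I would transfer the zero-capacity statement downward in two steps. \thetag{a} By \eqref{:20e} the set $ \mathcal{N}_1 $ is $ \Emu $-exceptional; using that $ \iota $ transports the coupled diffusion $ (X,\Y )$ onto $ \mathsf{X}$ (which rests on \lref{l:34} and the construction of $ (X,\Y )$ from $ \mathsf{X}$ in \cite{o.tp}), I would deduce that $ \iota ^{-1}(\mathcal{N}_1)=\R \ts (\mathcal{N}_1\cup\mathcal{N}_2 )$ is exceptional for the coupled Dirichlet space $ (\EXY ,\dXY )$ of \lref{l:32}, i.e.\ $ \mathrm{Cap}_{\XY }(\R \ts (\mathcal{N}_1 \cup \mathcal{N}_2 ))=0 $. \thetag{b} By \lref{l:33} the environment process $ \Y $ is the $ \Y $-marginal of $ (X,\Y )$; hence if $ \mathrm{Cap}_{\Y }(\mathcal{N}_1\cup\mathcal{N}_2 )>0 $ then (via the equilibrium potential) $ \Y $ would hit $ \mathcal{N}_1\cup\mathcal{N}_2 $ with positive probability from a starting set of positive $ \muz $-measure, so $ (X,\Y )$ would hit $ \R \ts (\mathcal{N}_1\cup\mathcal{N}_2 )$ with positive probability from a starting set of positive $ dx\, d\muz $-measure, contradicting \thetag{a}. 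Therefore $ \mathrm{Cap}_{\Y }(\mathcal{N}_1\cup\mathcal{N}_2 )=0 $, which together with $ \muz (\mathcal{N}_1\cup\mathcal{N}_2 )=0 $ is \eqref{:51a}.

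The step I expect to be the main obstacle is making the transfer \thetag{a} rigorous, namely controlling how exceptional (\lq\lq bad starting point'') sets pass from the Dirichlet space $ (\Emu ,\mathcal{D}^{\mu })$ of $ \mathsf{X}$ to that of the coupled process $ (X,\Y )$: what is genuinely needed is the implication \lq\lq $ A\subseteq\SSS $ is $ \Emu $-exceptional $ \Rightarrow $ $ \iota ^{-1}(A) $ is $ \EXY $-exceptional''. This is a capacity comparison of the type established in \cite{o.tp} (where the coupled form is related, through the one-Campbell measure $ \mu ^{[1]} $, to $ \Emu $, assumption \eqref{:20z} being exactly what secures the relevant coupled form), and I would invoke those comparison results rather than reprove them, the only point to verify being that they apply to the present somewhat more general setting. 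Granting that, what remains is elementary: the set identity $ \iota ^{-1}(\mathcal{N}_1)=\R \ts (\mathcal{N}_1\cup\mathcal{N}_2 )$, the Palm computation, and the probabilistic argument for \thetag{b} sketched above.
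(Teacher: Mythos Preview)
Your proposal is correct and follows essentially the same route as the paper: both deduce \eqref{:51a} from the non-collision assumption \eqref{:20e} by observing that under the change of coordinates $X=X^{0}$, $Y^{i}=X^{i}-X^{0}$ a collision of $\mathsf{X}$ corresponds exactly to $\Y$ landing in $\mathcal{N}_1\cup\mathcal{N}_2$. The paper's proof is a three-sentence version of your argument---it simply asserts that ``$\Y$ inherits the non-collision property from $\mathsf{X}$'' and concludes $\mathrm{Cap}_{\Y}(\mathcal{N}_1)=\mathrm{Cap}_{\Y}(\mathcal{N}_2)=0$ directly---whereas you spell out the mechanism via the coupled diffusion $(X,\Y)$, the set identity $\iota^{-1}(\mathcal{N}_1)=\R\times(\mathcal{N}_1\cup\mathcal{N}_2)$, and the capacity-comparison results of \cite{o.tp}; in effect your step \thetag{a} is precisely the content the paper's phrase ``inherits'' glosses over.
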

\begin{proof}
Recall that $ \mathrm{Cap}(\mathcal{N}_1 ) = 0 $ by assumption, which 
implies the non-collision property of $ \mathsf{X}$. 
Because 
$ \Y =\sum_{\iZs } \delta_{Y^{i}_{t}} = 
\sum_{\iZs } \delta_{X_{t}^{i}-X_{t}^{0}}$ 
is given by \eqref{:30y} and \eqref{:30z}, 
$ \mathsf{Y}$ inherits the non-collision property from $ \mathsf{X}$. 
Hence we deduce 
$ \mathrm{Cap}_{\Y } (\mathcal{N}_1) = 
\mathrm{Cap}_{\Y } (\mathcal{N}_2 ) =0 
$
from $ \mathrm{Cap}(\mathcal{N}_1 ) = 0 $. This implies \eqref{:51a}. 
\end{proof}

Let $ \varphi _N $ be the function on $ \mathsf{S}$ such that 
\begin{align}
&\notag 
\varphi _N (\mathsf{s}) = \frac{1}{N} \{ s_1 + \cdots + s_N \}
,\end{align}
where we write $ \mathsf{s}=\sum_{\iZs } \delta_{s_i}$ in such a way that 
$ s_{-1}< 0 < s_1 < s_2 < s_3 ,\ldots$. 
We note that $ \varphi _N $ is neither continuous on $ \mathcal{N}_1 $ nor 
 smooth on $ \mathcal{N}_1\cup \mathcal{N}_2  $; nevertheless, 
$ \varphi _N $ is an element of the domain of the Dirichlet form. 
Indeed, \lref{l:51} implies the following. 
\begin{lem} \label{l:52} For each $ N \in\N $, 
\begin{align}\label{:52a}&
\varphi _N \in \dY 
.\end{align}
Furthermore, $ \{ \varphi _N  \}_{N\in\N} $ is 
an $ \EY $-Cauchy sequence.  
\end{lem}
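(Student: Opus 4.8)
The plan is to show directly that $\varphi_N$ can be approximated in $\EY$-norm by functions in $\dYz$, using the non-collision property encoded in \lref{l:51}, and then to compute $\EY(\varphi_N,\varphi_N)$ explicitly and verify the Cauchy property. First I would address membership \eqref{:52a}. The function $\varphi_N$ fails to be smooth or even continuous only on $\mathcal{N}_1\cup\mathcal{N}_2$, which by \lref{l:51} has $\mathrm{Cap}_{\Y}$-capacity zero; on the complement, $\varphi_N(\mathsf{s})=\frac1N(s_1+\cdots+s_N)$ is a smooth local function of the labelled coordinates away from the exceptional set, and one has $\Dsft \varphi_N \equiv 1$ (each $s_i$ shifts by $\epsilon$ under $\theta_{\epsilon}$, and the $N$-fold average of the derivatives gives $1$) and $\DDD[\varphi_N,\varphi_N](\mathsf{s})=\frac{1}{2N^2}\cdot N=\frac{1}{2N}$, since $\partial \check{\varphi}_N/\partial s_i = 1/N$ for $i=1,\dots,N$ and $0$ otherwise. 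Hence formally $\EY(\varphi_N,\varphi_N)=\int_{\SSS}\{\Dsft[\varphi_N,\varphi_N]+\DDD[\varphi_N,\varphi_N]\}d\mugz = \frac12 + \frac{1}{2N}$, which is finite. To make this rigorous I would introduce cut-offs: multiply by local smooth functions $\psi_r$ depending on $\pi_r(\mathsf{s})$ that localize the configuration, and smooth out the indices near the origin so the resulting functions lie in $\dYz=\Lmuz\cap\dz$ with $\EY(\cdot,\cdot)<\infty$; because the singular set has zero capacity, such an approximating sequence converges to $\varphi_N$ in the $\EY_1$-norm, so $\varphi_N\in\dY$.

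Next, for the Cauchy property, I would compute $\EY(\varphi_N-\varphi_M,\varphi_N-\varphi_M)$ for $M>N$. Since $\Dsft(\varphi_N-\varphi_M)=1-1=0$, the first term $\EYone$ drops out entirely, and it remains to control $\EYttwo(\varphi_N-\varphi_M,\varphi_N-\varphi_M)=\int_{\SSS}\DDD[\varphi_N-\varphi_M,\varphi_N-\varphi_M]\,d\mugz$. Writing $\check{\varphi}_N-\check{\varphi}_M$ as a function of the $s_i$, its partial derivative with respect to $s_i$ equals $\frac1N-\frac1M$ for $i=1,\dots,N$, equals $-\frac1M$ for $i=N+1,\dots,M$, and vanishes otherwise; so $\DDD[\varphi_N-\varphi_M,\varphi_N-\varphi_M]=\frac12\{N(\frac1N-\frac1M)^2 + (M-N)\frac{1}{M^2}\}$, which after simplification equals $\frac12\{\frac1N - \frac1M\} \to 0$ as $N,M\to\infty$. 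Hence $\{\varphi_N\}$ is $\EY$-Cauchy (and in fact $\EYone$-orthogonal increments make the $\EY$-distances telescope nicely).

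The main obstacle is the membership claim \eqref{:52a}: $\varphi_N$ is genuinely not in $\dz$ since it is not continuous across $\mathcal{N}_1$ and not local (it depends on the ordering of infinitely many points through the choice of $s_1,\dots,s_N$), so one cannot simply cite closability. The key is that \lref{l:51} gives $\mathrm{Cap}_{\Y}(\mathcal{N}_1\cup\mathcal{N}_2)=0$, which lets us disregard the singular set: the appropriate strategy is to exhibit $\varphi_N$ as the $\EY_1$-limit of genuinely local smooth functions obtained by the combined localization/smoothing procedure above, invoking the fact that a function agreeing q.e.\ with an $\EY_1$-limit of $\dYz$-functions lies in $\dY$. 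I expect the bookkeeping for the cut-off functions — ensuring the error terms from $\nabla_x\psi_r$ and from smoothing near the origin vanish, using the local boundedness of correlation functions \eqref{:20d} — to be the only delicate point, but this is routine given the zero-capacity input.
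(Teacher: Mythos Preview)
Your proposal is correct and follows essentially the same approach as the paper: both invoke \lref{l:51} to dispose of the singular set $\mathcal{N}_1\cup\mathcal{N}_2$, compute $\Dsft\varphi_N\equiv 1$ and $\DDD[\varphi_N,\varphi_N]=\tfrac{1}{2N}$ to obtain finite $\EY$-energy, and then use $\Dsft(\varphi_M-\varphi_N)=0$ to reduce the Cauchy estimate to $\DDD$ alone. The only differences are cosmetic: you compute $\DDD[\varphi_N-\varphi_M,\varphi_N-\varphi_M]=\tfrac{1}{2}(\tfrac{1}{N}-\tfrac{1}{M})$ exactly, whereas the paper uses the cruder bound $2\{\DDD[\varphi_M,\varphi_M]+\DDD[\varphi_N,\varphi_N]\}=\tfrac{1}{M}+\tfrac{1}{N}$, and you are more explicit about the cut-off approximation that the paper compresses into one line.
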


\begin{proof}
 By \lref{l:51} we have 
$ \mathrm{Cap}_{\Y }(\mathcal{N}_1\cup\mathcal{N}_2 )=0$.  
The points of discontinuity of $ \varphi $ are included in 
$ \mathcal{N}_1\cup\mathcal{N}_2  $. 
Away from the points of discontinuity, $ \varphi $ satisfies 
\begin{align}\notag & 
\DY [\varphi _N, \varphi _N] \le \frac{1}{2}\{1+  \frac{1}{N} \} 
.\end{align}
Combining these, we see that 
\begin{align}\notag & 
\EY (\varphi _N, \varphi _N) \le \frac{1}{2}\{1+  \frac{1}{N} \} 
.\end{align}
Hence we obtain claim \eqref{:52a}. 

A straightforward calculation shows that 
$ \Dsft [\varphi _M-\varphi _N, \varphi _M-\varphi _N]=0$. 
Recall that 
$ \DY  =  \Dsft  +  \, \DDD $ by \eqref{:31y}. 
Then, 
\begin{align}\notag  
\DY [\varphi _M-\varphi _N, \varphi _M-\varphi _N] 
 = &
\DDD [\varphi _M-\varphi _N, \varphi _M-\varphi _N] 
\\ \notag  \le  &
2\big\{ \DDD [\varphi _M, \varphi _M] + \DDD [\varphi _N, \varphi _N] \big\} 
= \big\{ \frac{1}{M} + \frac{1}{N}\big\} 
.\end{align} 
Thus we have
\begin{align}\notag & 
\limi{M,N} \EY (\varphi _M-\varphi _N, \varphi _M-\varphi _N)
= 0
.\end{align}
This completes the proof of \lref{l:52}. 
\end{proof}

\begin{lem} \label{l:53} 
Let $ \tilde{\varphi }_N$ be the element of $ \dtY $ whose representative 
is $ \varphi _N $. Then 
\begin{align}\label{:53a}&
\limi{N} \tilde{\varphi }_N = (1,0) 
\quad \text{ in } \EYt  
.\end{align}
In particular,  $ (1,0)\in \dtY $. 
\end{lem}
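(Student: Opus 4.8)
The plan is to show that in the quotient Hilbert space $(\dtY, \EYt)$, the sequence $\tilde\varphi_N$ converges to the element $(1,0)$, where $1 = \ee$ denotes the constant function. The point is that $\tilde\varphi_N = (\Dsft\varphi_N, \varphi_N/\!\sim_{\EYtwo})$, and we must control both coordinates: the first in $\Lmuz$ and the second in the quotient space $\dYttwo$ (equivalently, control $\DDD[\varphi_N,\varphi_N]$ in $L^1(\muz)$, since $\EYttwo$ is the quotient of $\EYtwo(f,f)=\int_\SSS \DDD[f,f]\,d\mugz$).

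First I would compute the shift-derivative. Since $\varphi_N(\mathsf{s}) = \tfrac1N(s_1+\cdots+s_N)$ and shifting $\mathsf{s}$ by $\epsilon$ shifts every coordinate $s_i$ by $\epsilon$, we get $\varphi_N(\theta_{\epsilon}(\mathsf{s})) = \varphi_N(\mathsf{s}) + \epsilon$, hence $\Dsft\varphi_N \equiv 1$ identically (away from $\mathcal N_1\cup\mathcal N_2$, which is $\mathrm{Cap}_\Y$-polar by \lref{l:51}, and hence $\mugz$-null since capacity zero implies measure zero). Thus the first coordinate of $\tilde\varphi_N$ is exactly $\ee = 1$ for every $N$, so the first-coordinate term of $\EYt(\tilde\varphi_N - (1,0), \tilde\varphi_N - (1,0))$ vanishes identically. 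It remains to show the second-coordinate term tends to $0$, i.e.
\begin{align}\notag
\limi{N} \EYttwo(\tilde\varphi_N, \tilde\varphi_N) = \limi{N}\int_\SSS \DDD[\varphi_N,\varphi_N]\,d\mugz = 0.
\end{align}
But from the estimate in the proof of \lref{l:52}, $\DDD[\varphi_N,\varphi_N] = \tfrac1{2}\sum_{j}(\partial\check\varphi_N/\partial s_j)^2 \le \tfrac1{2N}$ pointwise away from the polar set, because $\check\varphi_N$ has exactly $N$ nonzero partials each equal to $1/N$. Integrating against the probability measure $\mugz$ gives $\EYttwo(\tilde\varphi_N,\tilde\varphi_N)\le \tfrac1{2N}\to 0$.

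Combining, $\EYt(\tilde\varphi_N - (1,0), \tilde\varphi_N - (1,0)) = \|1 - 1\|_{\Lmuz}^2 + \EYttwo(\tilde\varphi_N,\tilde\varphi_N) = \EYttwo(\tilde\varphi_N,\tilde\varphi_N) \le \tfrac1{2N} \to 0$, which is \eqref{:53a}. Since $\dtY$ is complete and $\tilde\varphi_N \in \dtY$ by \lref{l:52}, the limit $(1,0)$ lies in $\dtY$, proving the final assertion. The only subtle point — and the main thing to get right — is the justification that $\varphi_N$, despite being discontinuous on $\mathcal N_1$ and non-smooth on $\mathcal N_1\cup\mathcal N_2$, genuinely defines an element of $\dtY$ with $\Dsft\varphi_N = 1$ and $\DDD[\varphi_N,\varphi_N]\le\tfrac1{2N}$ $\mugz$-a.e.; this rests entirely on \lref{l:51} (the non-collision/capacity statement) together with \lref{l:52}, both already established, so no genuinely new difficulty arises here.
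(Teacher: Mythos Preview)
Your proposal is correct and follows essentially the same route as the paper: compute $\Dsft\varphi_N\equiv 1$ and $\DDD[\varphi_N,\varphi_N]=\tfrac{1}{2N}$ (you wrote $\le\tfrac{1}{2N}$, which also suffices), so that the first coordinate of $\tilde\varphi_N$ is already $1$ and the second coordinate tends to $0$ in $\EYttwo$, whence $\tilde\varphi_N\to(1,0)$ in $\EYt$ and $(1,0)\in\dtY$ by completeness. The paper phrases the last step by first invoking the Cauchy property from \lref{l:52} and then identifying the limit, but the content is identical.
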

\begin{proof}
By \lref{l:52}, $ \{ \varphi _N  \}_{N\in\N} $ is an $ \EY $-Cauchy sequence. 
Hence 
we easily deduce that $ \{  \tilde{\varphi }_N \}_{N} $ 
is a Cauchy sequence in the quotient Dirichlet space $ (\EYt , \dtY )$. 

By a direct calculation 
we see that, for all $ N $ and $ \muz $-a.s.\! $ \mathsf{s}$, 
\begin{align}\notag & 
 \Dsft \varphi _N (\mathsf{s}) = 1 ,\quad  
\DDD [\varphi _N,\varphi _N]  (\mathsf{s}) = 1/2N
.\end{align}Hence we have 
$ \EYttwo (\varphi _N,\varphi _N) = 1/2N $. 
Combining these we obtain \eqref{:53a}. 
\end{proof}

\noindent {\em Proof of \tref{l:50}}  
From \lref{l:53}, we see that  $ (1,0)\in \dtY $. 
Hence we obtain $ \selfmu  = 0  $ from \lref{l:42}.  
\qed

\section{Proof of \tref{l:21}}  \label{s:6}

\tref{l:21} follows immediately from \tref{l:37} and \tref{l:50}.

\bigskip 
\noindent {List of corrections of typos: }

\noindent 
$ \bullet $ 264p 8 line from below: 
$ \sqrt{\hat{a} (\mathsf{s})} B $ $ \Rightarrow $ $ \sqrt{\hat{a}(\sx )} B $

\noindent 
$ \bullet $ 264p 6 line from below: 
$ \sqrt{\hat{a} (\mathsf{s})} B $ $ \Rightarrow $ $ \sqrt{\hat{a}(\sx )} B $

\noindent 
$ \bullet $ 265p 2 line: 
$ \sqrt{\hat{a} (\mathsf{s})} B $ $ \Rightarrow $ $ \sqrt{\hat{a}(\sx )} B $

\noindent 
$ \bullet $ 265p 3 line: 
$ \sqrt{\hat{a} (\mathsf{s})} B $ $ \Rightarrow $ $ \sqrt{\hat{a}(\sx )} B $

\noindent 
$ \bullet $ 265p 4 line: 
$ \sqrt{\hat{a} (\mathsf{s})} B $ $ \Rightarrow $ $ \sqrt{\hat{a}(\sx )} B $

\noindent 
$ \bullet $ 265p 9 line: 
$ \sqrt{\hat{a} (\mathsf{s})} B $ $ \Rightarrow $ $ \sqrt{\hat{a}(\si )} B $

\noindent 
$ \bullet $ 265p 11 line: 
$ \sqrt{\hat{a} (\mathsf{s})} B $ $ \Rightarrow $ $ \sqrt{\hat{a}(\si )} B $

\noindent 
$ \bullet $ 265p 4 line from below: 
$ \sqrt{\hat{a} (\mathsf{s})} B $ $ \Rightarrow $ $ \sqrt{\hat{a}(\si )} B $

\noindent 
$ \bullet $ 265p 3 line from below: 
$ \sqrt{\hat{a} (\mathsf{s})} B $ $ \Rightarrow $ $ \sqrt{\hat{a}(\sx )} B $

\noindent 
$ \bullet $ 268p 12 line: \\
$   \selfmu  = \EYtone  ( \ee  - \chi _1  ,   \ee  - \chi _1  )  
+ \EYttwo  ( \chi _2 , \chi _2 )  $ 
$ \Rightarrow $ 
$  \half  \selfmu  = \EYtone  ( \ee  - \chi _1  ,   \ee  - \chi _1  )  
+ \EYttwo  ( \chi _2 , \chi _2 )  $

\noindent 
$ \bullet $ 269p 4 line: $ \langle M \rangle_t =   \EY (x-\chi , x-\chi ) t $
 $ \Rightarrow $ 
$ \langle M \rangle_t =  2 \EY (x-\chi , x-\chi ) t $

\end{document}